\theoremstyle{plain}
\newtheorem{theorem}{Theorem}[section]
\newtheorem{proposition}[theorem]{Proposition}
\newtheorem{lemma}[theorem]{Lemma}
\newtheorem{corollary}[theorem]{Corollary}
\theoremstyle{definition}
\newtheorem{definition}[theorem]{Definition}
\newtheorem{example}[theorem]{Example}
\newtheorem{question}[theorem]{Question}
\newtheorem{remark}[theorem]{Remark}
\newtheorem{observation}[theorem]{Observation}
\numberwithin{equation}{section}
\newcommand{\todo}[1]{\vspace{5 mm}\par \noindent
	\marginpar{\textsc{ToDo}} \framebox{\begin{minipage}[c]{0.95
				\textwidth}
			#1 \end{minipage}}\vspace{5 mm}\par}
\newcommand{\RR}{{\mathbb {R}}}
\newcommand{\ZZ}{{\mathbb {Z}}}
\newcommand{\sign}{{\operatorname{sign}}}
\newcommand{\inv}{{\operatorname{inv}}}
\newcommand{\Inv}{{\operatorname{Inv}}}
\newcommand{\odd}{{\operatorname{odd}}}
\newcommand{\blambda}{{\bm{\lambda}}}
\newcommand{\bT}{{\bm{T}}}
\newcommand{\bb}{{\bm{b}}}
\newcommand{\bRT}{{\bm{RT}}}
\newcommand{\Par}{{\operatorname{Par}}}
\title{
	On characters of 
	wreath products}
\author{Ron M. Adin}
\address{Department of Mathematics, Bar-Ilan University, Ramat-Gan 52900, Israel}
\email{radin@math.biu.ac.il}
\author{Yuval Roichman}
\address{Department of Mathematics, Bar-Ilan University, Ramat-Gan 52900, Israel}
\email{yuvalr@math.biu.ac.il}
\date{August 31, 2021; revised: May 29, 2022}
\thanks{Partially supported by the Israel Science Foundation, Grant No.\ 1970/18.}
\begin{document}
	
	\maketitle
	
	\begin{abstract}
		A character identity which relates irreducible character values of the  hyperoctahedral group $B_n$ to those of the symmetric group $S_{2n}$ was recently proved by L\"ubeck and Prasad. 
		Their proof is algebraic 
		and involves Lie theory.
		We present a short combinatorial proof of this identity,
		as well as a
		generalization to other wreath products.
	\end{abstract}

	\section{Introduction}
	
	One of the 
	most important and well-studied finite groups
	is the classical Weyl group of type $B_n$,
	also known as the hyperoctahedral group,
	the group of symmetries of the  hypercube,
	or the group of signed permutations.
	The character theory of the hyperoctahedral group was developed by Specht 
	more than 80 years ago,
	using its presentation as a wreath product.
	While the irreducible characters of the symmetric group $S_n$ are indexed by the integer partitions of $n$, those of $B_n$ are indexed by 
	pairs of partitions of total size $n$,
	or equivalently
	by partitions of $2n$ with an empty 2-core (to be defined below). 
	It is well known that the degree of an irreducible $B_n$-character is equal, up to sign, to the value,
	at the longest element of $S_{2n}$,
	of the irreducible $S_{2n}$-character indexed by the same partition of $2n$; 
	see, e.g., \cite[p.\ 110]{Lusztig}.
	
	\medskip
	
	This phenomenon was recently generalized 
	by L\"ubeck and Prasad~\cite{LPA}, 
	presenting the following character identity,
	which relates the   
	irreducible characters of 
	$B_n$ 
	to those of 
	$S_{2n}$.
	
	\medskip
	
	Recall the notation $\lambda\vdash n$ for an integer partition $\lambda$ of $n$.
	Denote by $\chi^\lambda$ (respectively, $\psi^{\lambda}$) the irreducible character of $S_n$ (respectively, $B_n$)  indexed by $\lambda \vdash n$ (respectively, by $\lambda \vdash 2n$ with an empty $2$-core).
	For a partition $\mu \vdash n$, denote by $\chi^\lambda_\mu$ (respectively, $\psi^\lambda_{(\mu,\varnothing)}$) 
	the evaluation of this character at a conjugacy class 
	of type $\mu$ (respectively, $(\mu,\varnothing)$).
	Denote by $\Par_2(2n)$ the set of all partitions of $2n$ with an empty 2-core. 
	
	\begin{theorem}\label{thm:main}\cite[Theorem 6.1]{LPA}
		There exists a function $\epsilon : \Par_2(2n) \to \{1,-1\}$ such that, 
		for every $\lambda\in \Par_2(2n)$ and $\mu\vdash n$,
		\[
		\psi^{\lambda}_{(\mu,\varnothing)} 
		= \epsilon(\lambda) \chi^\lambda_{2\mu},
		\]
		where $2(\mu_1,\dots,\mu_t):=(2\mu_1,\dots,2\mu_t)$.
	\end{theorem}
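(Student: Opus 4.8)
The plan is to exploit the fact that $B_n = \ZZ_2 \wr S_n$, so that its irreducible characters are explicitly described by Specht's wreath-product construction, and to compute the value $\psi^\lambda_{(\mu,\varnothing)}$ directly in terms of a product of two symmetric-group characters. Concretely, write a partition $\lambda \in \Par_2(2n)$ via its $2$-quotient $(\lambda^{(0)}, \lambda^{(1)})$, a pair of partitions with $|\lambda^{(0)}| + |\lambda^{(1)}| = n$ (this is the standard bijection $\Par_2(2n) \leftrightarrow \{(\alpha,\beta) : |\alpha|+|\beta| = n\}$, using that the $2$-core is empty). Under Specht's parametrization the irreducible $B_n$-character indexed by $\lambda$ is $\psi^{(\alpha,\beta)}$, and on a conjugacy class of type $(\mu,\varnothing)$ — i.e.\ an element all of whose signed cycles are positive, with underlying cycle type $\mu$ — the character value factors through the Littlewood–Richardson-type formula
\[
\psi^{(\alpha,\beta)}_{(\mu,\varnothing)} = \sum_{\nu} c_{\mu}(\alpha,\beta,\nu)\,\chi^{\cdot}_{\cdot},
\]
which in the "all positive cycles" case collapses to $\sum_{\rho \cup \sigma = \mu} \chi^\alpha_\rho \chi^\beta_\sigma$, i.e.\ the character of the induced representation $\chi^\alpha \times \chi^\beta$ of $S_{|\alpha|} \times S_{|\beta|}$ evaluated at a permutation of cycle type $\mu$. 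In symmetric-function language, this is the coefficient extraction from $s_\alpha[p_\mu]\cdot$ — more cleanly, $\sum_{\mu \vdash n} \psi^{(\alpha,\beta)}_{(\mu,\varnothing)} \frac{p_\mu}{z_\mu} = s_\alpha \cdot s_\beta$ in the ring of symmetric functions (with $p_\mu = \prod p_{\mu_i}$).

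The second half is to identify $\chi^\lambda_{2\mu}$ with the same quantity up to sign. Here the key tool is the Murnaghan–Nakayama rule together with the theory of the $2$-quotient and $2$-sign: evaluating an $S_{2n}$-character $\chi^\lambda$ at a permutation whose cycle type is $2\mu = (2\mu_1, \ldots, 2\mu_t)$ means repeatedly stripping ribbons of even length $2\mu_i$. A ribbon of length $2k$ in $\lambda$ corresponds, under the abacus/$2$-quotient description, to a ribbon of length $k$ in exactly one of the two components $\lambda^{(0)}, \lambda^{(1)}$, and the Murnaghan–Nakayama sign of the $2k$-ribbon in $\lambda$ equals the MN-sign of the $k$-ribbon in the component times a global sign depending only on $\lambda$ (the "$2$-sign" $\delta_2(\lambda)$), not on $\mu$. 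Iterating, one gets
\[
\chi^\lambda_{2\mu} = \delta_2(\lambda) \sum_{\rho \cup \sigma = \mu} \chi^{\lambda^{(0)}}_\rho \chi^{\lambda^{(1)}}_\sigma = \delta_2(\lambda)\, \psi^{(\lambda^{(0)},\lambda^{(1)})}_{(\mu,\varnothing)},
\]
so that $\epsilon(\lambda) := \delta_2(\lambda) \in \{1,-1\}$ does the job, and it manifestly depends only on $\lambda$ (not on $\mu$), which is exactly what the theorem asserts. The equality $\psi^\lambda = \psi^{(\lambda^{(0)},\lambda^{(1)})}$ is just the matching of the two standard indexing conventions for $\widehat{B_n}$.

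**The main obstacle** I expect is making the ribbon-stripping bookkeeping precise: one must verify that the global sign accumulated in peeling off $2\mu_1, \ldots, 2\mu_t$ in succession depends only on $\lambda$ and not on the order or the sizes $\mu_i$ — equivalently, that at each stage the "$2$-sign" of the intermediate partition behaves multiplicatively. The clean way to handle this is not to induct naively on the MN rule but to encode everything on an abacus with two runners: removing a $2k$-ribbon from $\lambda$ is a single bead-move by $k$ steps on one runner, removing a $k$-ribbon from the corresponding component is the same bead-move, and the discrepancy in leg-lengths between the "fat" ribbon in $\lambda$ and the "thin" ribbon in the component is governed by a fixed quantity read off from the initial bead positions. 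Once this abacus lemma is in place, both sides of the identity are seen to be the same signed sum over set-partitions $\rho \sqcup \sigma = \mu$, and the theorem follows. A secondary, purely organizational point is to confirm that a conjugacy class of type $(\mu,\varnothing)$ in $B_n$ indeed consists of signed permutations whose negative cycles are absent — i.e.\ the "$\varnothing$" records that the signed-cycle-type has no negative part — so that Specht's character formula really does reduce to the product $\chi^{\lambda^{(0)}} \times \chi^{\lambda^{(1)}}$ with no contribution from the sign characters of the $\ZZ_2$ factors.
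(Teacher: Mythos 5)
Your proposal is correct and follows essentially the same route as the paper: both character values are computed via the Murnaghan--Nakayama rule, the $2$-quotient turns each $2\mu_i$-ribbon of $\lambda$ into a $\mu_i$-ribbon of one component of $(\lambda^{(0)},\lambda^{(1)})$, and the accumulated sign discrepancy between the ``fat'' and ``thin'' ribbons is shown to be a global sign depending only on $\lambda$. The crux you flag as the main obstacle --- that this discrepancy is independent of $\mu$ and of the peeling order --- is precisely what the paper establishes in Proposition~\ref{prop:mainr}, by encoding the abacus bead positions as a row-color sequence and showing that the per-step discrepancy equals a difference of inversion numbers, which telescopes; note that the per-step discrepancy is \emph{not} read off from the initial bead positions alone (it depends on the current configuration), only the telescoped total is.
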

	
	The proof in~\cite{LPA} is algebraic in nature, 
	and involves 
	Lie theory. We present here a short combinatorial proof, applying the Murnaghan-Nakayama rule.
	We state it, more generally, for the wreath product $G\wr S_n$ where $G$ 
	is any finite abelian group; 
	see Theorems~\ref{thm:main_r} and~\ref{thm:main_r_abelian} below.

	The rest of the paper is organized as follows. 
	Relevant background and notation are given in Section~\ref{sec:MN}. 
	The main result (Theorem~\ref{thm:main_r}) is stated and proved in Section~\ref{sec:main}. This result is further generalized 
	in Section~\ref{sec:finite_abelian_group} (see Theorem ~\ref{thm:main_r_abelian}).
	Section~\ref{sec:additional} contains some  
	alternative descriptions of
	the $r$-sign function.

	\section{The Murnaghan-Nakayama rule for wreath products}\label{sec:MN}
	
	
	In this section we recall some useful facts from combinatorial character theory, 
	regarding the Murnaghan-Nakayama rule for the symmetric group $S_n$ and for the wreath products $\ZZ_r \wr S_n$.
	
	The Murnaghan-Nakayama rule is an explicit formula for computing values of irreducible characters of the symmetric group; 
	see, e.g., \cite[\S 4.10]{Sagan}. 
	A generalization  to wreath products $G\wr S_n$, where $G$ is any finite group, was described by Stembridge~\cite[Theorem 4.3]{Stembridge}. 
	%
	We now give a very short exposition of the Murnaghan-Nakayama rule for $\ZZ_r \wr S_n$, following \cite[Proposition 2.2]{APR}. 
	We use the term {\em ribbon} instead of the older, equivalent, terms {\em border strip}, {\em skew hook}, and {\em rim hook}.

	\subsection{General version}

	A {\em composition} of a non-negative integer $n$ is a sequence $\lambda = (\ell_1, \ldots, \ell_k)$ of positive integers whose sum is $n$;
	we say that $n$ is the {\em size} of $\lambda$ and $k$ is its {\em length}.
	It is a {\em partition} of $n$ if $\ell_1 \ge \ldots \ge \ell_k$;
	in that case we write $\lambda \vdash n$.
	The only composition (or partition) of $0$ is the empty one, with $k = 0$. 
	The {\em diagram} corresponding to $\lambda$,
	according to the English convention,
	is an array of cells in the plane, arranged in left-justified rows of lengths $\ell_1, \ldots, \ell_k$, from top to bottom.
	
	An {\em $r$-partite partition} of $n$ is an $r$-tuple $\blambda = (\lambda_0, \ldots, \lambda_{r-1})$ such that each $\lambda_i$ is a partition of a non-negative integer $n_i$ and $n_0 + \ldots + n_{r-1} = n$.
	(We shall use boldface to denote $r$-partite
	concepts.)
	An {\em $r$-partite ribbon tableau} of shape $\blambda$ is a sequence
	\[
	\bT: \quad
	\varnothing = \blambda^{(0)} \subseteq \ldots \subseteq \blambda^{(t)} = \blambda
	\]
	of $r$-partite partitions (diagrams) such that each consecutive difference $\bb_i := \blambda^{(i)} \setminus \blambda^{(i-1)}$ $(1 \le i \le t)$, as an $r$-tuple of skew shapes, has $r-1$ empty parts and one nonempty part which is a {\em ribbon}, namely a connected skew shape ``of width $1$'';
	explicitly, a ribbon is a sequence of cells in which consecutive cells share an edge, and the steps are either due East or due North.
	For each $1 \le i \le t$,
	let $f_{\bT}(i) \in \{0, \ldots, r-1\}$ be the index of the nonempty part in the $r$-tuple $\bb_i$,
	let $\ell_{\bT}(i)$ be the {\em length} (number of cells) of this part,
	and let $ht_{\bT}(i)$ be its {\em height} (one less than its number of rows).
	An $r$-partite ribbon tableau can also be described by 
	an $r$-tuple of tableaux, in which the cells of each ribbon $\bb_i$ are marked $i$ $(1 \le i \le t)$.
	
	
	\begin{example}\label{ex:ribbon_tableau}
		Here is a $3$-partite ribbon tableau $\bT$ of shape $\blambda = ((4,3),(2),(1,1))$, with $t = 4$. 
		The ribbon indices (omitting the subscript $\bT$) are $f(1) = f(3) = 0$, $f(4) = 1$, $f(2) = 2$,
		with corresponding lengths $\ell(1) = 3$, $\ell(3) = 4$, $\ell(4) = \ell(2) = 2$,
		and heights $ht(1) = ht(3) = ht(2) = 1$, $ht(4) = 0$:
		\[
		\left(
		\,\,
		\begin{ytableau}
		1 & 1 & 3 & 3 \\
		1 & 3 & 3 \\
		\end{ytableau}
		\quad,\quad
		\begin{ytableau}
		4 & 4 \\
		\end{ytableau}
		\quad,\quad
		\begin{ytableau}
		2 \\
		2 \\
		\end{ytableau}
		\,\,
		\right) \, .
		\]
	\end{example}
	
	
	
	\medskip
	
	The {\em wreath product} $\ZZ_r\wr S_n$ is the semidirect product of 
	$\ZZ_r^n$, the $n$-th direct power of the cyclic group
	$\ZZ_r$,
	with the symmetric group $S_n$, obtained by the natural $S_n$-action on the $n$ copies of $\ZZ_r$, namely 
	\[
	\ZZ_r\wr S_n := \{(\sigma, (z_1,\ldots,z_n)) \,:\, \sigma \in S_n,\, z_i \in\ZZ_r\,(\forall i)\}
	\]
	with the group operation
	\[
	(\sigma, (z_1,\ldots,z_n)) \cdot (\tau, (y_1,\ldots,y_n)) :=
	(\sigma\tau, (z_{\tau^{-1}(1)} + y_1, \ldots, z_{\tau^{-1}(n)} + y_n)).
	\]
	$\ZZ_r\wr S_n$ can also be viewed as a group of $r$-colored permutations, consisting of all the permutations of the set of $rn$ colored digits
	$\{(i,z) \,:\, 1\le i\le n,\, z \in \ZZ_r\}$ which are $\ZZ_r$-equivariant, in the sense that if 
	$\pi(i,z) = (j,y)$ then $\pi(i,z+x) = (j,y+x)$ for all $x \in \ZZ_r$.
	The {\em cycle decomposition} of an element $(\sigma, (z_1,\ldots,z_n)) \in \ZZ_r\wr S_n$ is the decomposition of its underlying permutation $\sigma \in S_n$ as a product of disjoint cycles, 
	with each cycle $c = (i_1, \ldots, i_k)$ assigned a corresponding {\em color} $z(c) := z_{i_1} + \ldots + z_{i_k} \in \ZZ_r$.
	The corresponding {\em cycle structure} is 
	the $r$-partite partition $\blambda = (\lambda_0, \ldots, \lambda_{r-1})$,
	where each partition $\lambda_j$ $(0 \le j \le r-1)$ records the cycle lengths of color $j$. 
	The conjugacy classes of $\ZZ_r \wr S_n$,
	as well as its irreducible characters,
	are indexed by the $r$-partite partitions of $n$.
	
	\begin{theorem}\label{thm:MN}{\rm [Murnaghan-Nakayama rule for $\ZZ_r\wr S_n$]}
		Fix an arbitrary ordering $c = (c_1, \ldots, c_t)$ of the disjoint cycles of an element $\pi \in \ZZ_r \wr S_n$. Let $\ell(c_i)$ be the length of the cycle $c_i$, and let $z(c_i) \in \ZZ_r$ be its color.
		Then, for any $r$-partite partition $\blambda$ of $n$,
		\[
		\psi^\blambda(\pi) = \sum_{\bT \in \bRT_c(\blambda)}
		\prod_{i=1}^{t} (-1)^{ht_{\bT}(i)} \omega^{f_{\bT}(i) \cdot z(c_i)},
		\]
		where $\bRT_c(\blambda)$ is the set of all $r$-partite ribbon tableaux $\bT$ of shape $\blambda$ such that $\ell_{\bT}(i) = \ell(c_i)$ $(\forall i)$;
		$f_{\bT}(i) \in \ZZ_r$ and $ht_{\bT}(i)$ are, respectively, the $i$-th index and height of $\bT$, as above;
		and $\omega := e^{2\pi i / r}$.
	\end{theorem}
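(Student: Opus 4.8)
The plan is to derive the identity from the classical characteristic map for wreath products, following the symmetric-function proof of the Murnaghan--Nakayama rule for $S_n$; a shortcut via Stembridge's general rule \cite[Theorem 4.3]{Stembridge} is noted at the end. First I would fix the algebraic model. Let $\Lambda$ be the ring of symmetric functions and $\Lambda_r := \Lambda(\xx^{(0)}) \otimes \cdots \otimes \Lambda(\xx^{(r-1)})$ its $r$-fold tensor power, with one alphabet $\xx^{(j)}$ for each irreducible character of $\ZZ_r$. Recall the characteristic isometry $\ch$ (due essentially to Specht) from $\bigoplus_{n \ge 0} R(\ZZ_r \wr S_n)$, with its induction product, onto $\Lambda_r$: it carries $\psi^\blambda$ to the $r$-partite Schur element $\bm s_\blambda := \prod_{j=0}^{r-1} s_{\lambda_j}(\xx^{(j)})$, and the value of $\psi^\blambda$ at the conjugacy class of cycle type $\bm\mu$ is recovered as $\psi^\blambda(\bm\mu) = \langle \bm s_\blambda, \bm p_{\bm\mu} \rangle$, where $\langle\cdot,\cdot\rangle$ is the tensor product of Hall pairings and $\bm p_{\bm\mu}$ is the colored power sum defined next. (This is the wreath-product version of the familiar $\chi^\lambda_\mu = \langle s_\lambda, p_\mu \rangle$.)

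Next I would make the power-sum side explicit. For a cycle of length $k$ and color $z \in \ZZ_r$, set $q_{k,z} := \sum_{j=0}^{r-1} \omega^{jz} \, p_k(\xx^{(j)}) \in \Lambda_r$. Then, for $\pi$ with disjoint cycles $c_1, \dots, c_t$, one reads off $\bm p_{\bm\mu} = \prod_{i=1}^t q_{\ell(c_i),\, z(c_i)}$ directly from the definition; in particular this product is independent of the order of the cycles, which is what legitimizes the word ``arbitrary'' in the statement.

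The core is to expand $\langle \bm s_\blambda, \prod_{i=1}^t q_{\ell(c_i), z(c_i)} \rangle$ by peeling off one cycle at a time. Because the pairing is a tensor of Hall pairings, the operator adjoint to multiplication by $q_{k,z}$ is $q_{k,z}^\perp = \sum_{j=0}^{r-1} \omega^{jz} \, p_k(\xx^{(j)})^\perp$, and these operators commute (distinct alphabets act on distinct tensor factors, and the $p_k^\perp$ within one alphabet lie in a commutative algebra). By the ordinary $S_n$ Murnaghan--Nakayama rule in symmetric-function form \cite[\S 4.10]{Sagan}, $p_k^\perp s_\lambda = \sum_\rho (-1)^{ht(\rho)} s_{\lambda \setminus \rho}$, the sum over removable ribbons $\rho$ of size $k$; hence applying $q_{\ell(c_i), z(c_i)}^\perp$ to an $r$-partite Schur element selects one of its $r$ components, removes from that component a ribbon $\bb_i$ of length $\ell(c_i)$, and multiplies by $(-1)^{ht(\bb_i)} \omega^{f(i) z(c_i)}$, where $f(i) \in \ZZ_r$ indexes the chosen component. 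Iterating over $c_1, \dots, c_t$ and then pairing with $\bm s_\varnothing = 1$ forces the successive removals to strip $\blambda$ down to the empty $r$-partite shape; such chains of removals are exactly the elements $\bT$ of $\bRT_c(\blambda)$, and the scalar collected along $\bT$ is $\prod_{i=1}^t (-1)^{ht_\bT(i)} \omega^{f_\bT(i) z(c_i)}$. Summing over $\bT$ gives the claimed formula.

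The step I expect to require the most care is keeping conventions consistent across the characteristic map: fixing the exponent in $q_{k,z}$, and checking that the factors of $r$ and the centralizer orders cancel so that the coefficient of each $\bT$ is the clean product above --- a useful sanity check being that $\langle q_{k,z}, q_{k,z'} \rangle$ equals $rk$ when $z + z' \equiv 0 \pmod r$ and $0$ otherwise, matching the centralizer order $z_{\bm\mu}$ in $\ZZ_r \wr S_n$. None of this is deep, but it is where sign errors creep in. Alternatively, one bypasses $\Lambda_r$ completely by specializing Stembridge's Murnaghan--Nakayama rule for arbitrary wreath products \cite[Theorem 4.3]{Stembridge} to $G = \ZZ_r$: since $\ZZ_r$ is abelian, its conjugacy classes are its elements $z$ and its irreducible characters are $z \mapsto \omega^{jz}$ $(j \in \ZZ_r)$, so Stembridge's factor ``the value of the $j$-th irreducible $\ZZ_r$-character at the color of $c_i$'' is exactly $\omega^{f_\bT(i) z(c_i)}$, while his $\widehat{\ZZ_r}$-colored ribbon tableaux are precisely the $r$-partite ribbon tableaux used here; the remainder is transcription.
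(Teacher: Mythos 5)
The paper does not actually prove Theorem~\ref{thm:MN}: it quotes it, following \cite[Proposition 2.2]{APR}, as the special case $G=\ZZ_r$ of Stembridge's Murnaghan--Nakayama rule for $G\wr S_n$ \cite[Theorem 4.3]{Stembridge}, which is restated as Theorem~\ref{thm:MN_G} in Section~\ref{sec:finite_abelian_group}. Your closing paragraph --- specializing Stembridge's theorem to $G=\ZZ_r$, whose conjugacy classes are its elements and whose irreducible characters are $z\mapsto\omega^{jz}$ --- is therefore exactly the paper's route, and is by itself a complete justification. Your main argument, via the characteristic map onto the $r$-fold tensor power of the ring of symmetric functions, is a genuinely different and self-contained proof: it reduces the wreath-product rule to the classical identity $p_k^\perp s_\lambda=\sum_\rho(-1)^{ht(\rho)}s_{\lambda\setminus\rho}$ applied one tensor factor at a time, and the commutativity of the adjoint operators makes transparent why the ordering of the cycles is immaterial. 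What that route costs is the bookkeeping you flag yourself: one must pin down the wreath-product characteristic map precisely enough to know that $\psi^\blambda(\pi)$ equals the pairing of the $r$-partite Schur element with $\prod_i q_{\ell(c_i),z(c_i)}$ with no residual centralizer factors and no stray complex conjugate on $\omega^{jz}$ (for $r>2$ the characters of $\ZZ_r$ are genuinely complex, so the placement of the conjugate in the Hall pairing is exactly where a wrong sign of the exponent would enter). Your sanity check that $\langle q_{k,z},q_{k,z'}\rangle$ equals $rk$ precisely when $z+z'\equiv 0\pmod r$, matching the centralizer order of a single colored $k$-cycle, is the right control on this, and with the conventions of Macdonald's Appendix B to Chapter I the argument closes; so the sketch is correct, merely more work than the citation the paper settles for.
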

	
	For $r = 1$ this reduces to the usual Murnaghan-Nakayama rule for $S_n$:
	\[
	\chi^\lambda(\sigma) = \sum_{T \in RT_c(\lambda)} \prod_{i=1}^{t} (-1)^{ht_{T}(i)},
	\]
	where $c = (c_1, \ldots, c_t)$ is an arbitrary ordering of the cycles of a permutation $\sigma \in S_n$,
	$\lambda$ is a partition of $n$, 
	and $RT_c(\lambda)$ is the set of all ribbon tableaux $T$ of shape $\lambda$ such that $\ell_{T}(i) = \ell(c_i)$ $(\forall i)$.
	
	\bigskip

	\subsection{A restatement}\label{sec:peelings}

	We want to restate Theorem~\ref{thm:MN},
	in the special case where $z(c_i) = 0$ $(\forall i)$,
	with the following notational changes:
	\begin{enumerate}
		\item
		Use a 0/1 encoding of partitions.
		\item 
		Use a recursive interpretation of ($r$-partite) ribbon tableaux.
		\item
		Replace each $r$-partite partition by a single partition.
	\end{enumerate}
	
	Let $\lambda$ be a partition, and let $D = [\lambda]$ be the corresponding diagram, drawn according to the English convention, so that row lengths weakly decrease from top to bottom.
	The {\em boundary sequence} of $\lambda$ is 
	a finite $0/1$ sequence $\partial(\lambda) = (\delta_1, \ldots, \delta_t)$,
	constructed as follows: 
	start at the southwestern corner of the diagram $D$, 
	and proceed along the edges of the southeastern boundary 
	up to the northeastern corner;
	encode each horizontal (east-bound) step by $1$, and each vertical (north-bound) step by $0$. 
	Thus $\partial(\lambda)$ starts with a $1$ and ends with a $0$ (unless $\lambda$ is the empty partition, for which $\partial(\lambda)$ is the empty sequence). 
	Each $1$ corresponds to a column of $D$ (columns ordered from left to right),
	and each $0$ corresponds to a row of $D$ (rows ordered from bottom to top). 
	
	\begin{observation}\label{obs:peeling}
		For $[\mu]\subset [\lambda]$, 
		the skew diagram $[\lambda/\mu] = [\lambda]\setminus [\mu]$ is a ribbon of length $k$ 
		if and only if $\partial(\mu)$ is obtained from $\partial(\lambda)$
		by exchanging two entries $\delta_j=1$ and $\delta_{j+k}=0$ for some $j$
		(and deleting leading $0$-s and trailing $1$-s from the resulting sequence).
	\end{observation}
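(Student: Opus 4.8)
The plan is to move everything to lattice paths, and to a bi‑infinite version of the boundary sequence in which the ``delete leading $0$‑s and trailing $1$‑s'' bookkeeping becomes automatic. Recall that $\partial(\lambda)$ reads off — east $=1$, north $=0$ — the path $P_\lambda$ tracing the southeastern boundary of $[\lambda]$ from its southwestern to its northeastern corner. Prolong $P_\lambda$ to a bi‑infinite monotone path (straight down the left edge, straight right along the top edge). Since $x+y$ increases by exactly $1$ at every step of such a path, its steps are canonically indexed by $\ZZ$: let $V_n(\lambda)$ be its vertex on the antidiagonal $x+y=n$, write $x(v)$ for the first coordinate of a point $v$, and let $\widehat{\partial}(\lambda)_n\in\{0,1\}$ be the direction ($1=$ east) of the step $V_{n-1}(\lambda)\to V_n(\lambda)$. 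Then $\widehat{\partial}(\lambda)$ is $0$ for $n\ll0$ and $1$ for $n\gg0$, it determines $\lambda$, and $\partial(\lambda)$ is exactly its ``essential window'', from the first $1$ to the last $0$. At the level of $\widehat{\partial}$, the operation in the statement is simply the transposition of an entry $\widehat{\partial}(\lambda)_p=1$ with the entry $\widehat{\partial}(\lambda)_{p+k}=0$ — re‑extracting the essential window performs the deletions. So it suffices to prove, for $[\mu]\subseteq[\lambda]$: the skew shape $[\lambda/\mu]$ is a ribbon of length $k$ if and only if $\widehat{\partial}(\mu)$ arises from $\widehat{\partial}(\lambda)$ by interchanging an entry $1$ in some position $p$ with the entry $0$ in position $p+k$.

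The bridge is to compare $P_\lambda$ and $P_\mu$ antidiagonal by antidiagonal. Because $[\mu]\subseteq[\lambda]$ the path $P_\mu$ lies weakly northwest of $P_\lambda$, that is $\varepsilon_n:=x(V_n(\mu))-x(V_n(\lambda))\le 0$ for all $n$; put $M:=\{n:\varepsilon_n<0\}$. I will use the elementary dictionary: (i) the antidiagonal $x+y=n$ crosses $[\mu]$ up to $V_n(\mu)$ and then $[\lambda/\mu]$ up to $V_n(\lambda)$, so it meets $[\lambda/\mu]$ in exactly $-\varepsilon_n$ cells and $|[\lambda/\mu]|=-\sum_n\varepsilon_n$; (ii) $[\lambda/\mu]$ contains a $2\times2$ block exactly when $\varepsilon_n\le-2$ for some $n$ (equivalently, when some antidiagonal meets it in $\ge 2$ cells); (iii) $[\lambda/\mu]$ is connected exactly when $M$ is an interval of consecutive integers. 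Hence $[\lambda/\mu]$ is a ribbon if and only if $M$ is a finite nonempty interval on which $\varepsilon_n\equiv-1$, and then its length is $|M|$.

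Granting this, suppose $M=\{p,\dots,p+k-1\}$ and $\varepsilon_n=-1$ there, $\varepsilon_n=0$ off $M$. From $\widehat{\partial}(\nu)_n=x(V_n(\nu))-x(V_{n-1}(\nu))$ for $\nu\in\{\lambda,\mu\}$ we get $\widehat{\partial}(\mu)_n-\widehat{\partial}(\lambda)_n=\varepsilon_n-\varepsilon_{n-1}$, which is $0$ for every $n$ except $n=p$, where it equals $-1$ (forcing $\widehat{\partial}(\lambda)_p=1$, $\widehat{\partial}(\mu)_p=0$), and $n=p+k$, where it equals $+1$ (forcing $\widehat{\partial}(\lambda)_{p+k}=0$, $\widehat{\partial}(\mu)_{p+k}=1$). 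This is exactly the asserted transposition, the two positions lying at distance $k=|M|$. The converse is the same identity read backwards: interchanging a $1$ at $p$ with the $0$ at $p+k$ makes $\varepsilon_n-\varepsilon_{n-1}$ vanish except for a $-1$ at $p$ and a $+1$ at $p+k$; summing (and using $\varepsilon_n=0$ for $n\ll0$) gives $\varepsilon_n=-1$ precisely on $\{p,\dots,p+k-1\}$ and $0$ elsewhere, so by (i)–(iii) $[\lambda/\mu]$ is a ribbon of length $k$.

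The one point needing real care is the dictionary, above all items (ii)–(iii): showing that ``$2\times2$‑free'' and ``connected'' translate faithfully into ``$\varepsilon_n\in\{0,-1\}$ for all $n$'' and ``$M$ is an interval''. Informally this says a ribbon is precisely the width‑one band squeezed between a path and a diagonal translate of one of its segments; making that rigorous — e.g.\ via the observation that a $2\times2$ block is the same thing as two skew cells on a common antidiagonal, which are then forced to sit inside such a block, and that a lattice path inside the skew shape changes antidiagonal by $\pm1$ at each step — is the crux. After that, the remaining ingredients (that $x+y$ indexes steps, that the essential window recovers $\partial$, that $|[\lambda/\mu]|=-\sum_n\varepsilon_n$, and the forced‑jump computation) are routine.
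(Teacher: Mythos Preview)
Your argument is correct. Note, however, that the paper does not supply a proof of this observation at all: it is recorded as a standard fact about the $0/1$ boundary encoding and used without further justification. So there is nothing to compare against.

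That said, your route is a clean way to make the folklore precise. Passing to the bi-infinite sequence $\widehat{\partial}$ indexed by the antidiagonal parameter $x+y$ is exactly the right move, since it absorbs the ``delete leading $0$-s and trailing $1$-s'' clause automatically. The displacement $\varepsilon_n = x(V_n(\mu)) - x(V_n(\lambda))$ and the telescoping identity
\[
\widehat{\partial}(\mu)_n - \widehat{\partial}(\lambda)_n \;=\; \varepsilon_n - \varepsilon_{n-1}
\]
do all the work: a ribbon is precisely the case where $\varepsilon$ is $-1$ on an interval of length $k$ and $0$ elsewhere, and then the right-hand side is nonzero only at the two endpoints, forcing the single transposition of a $1$ with a $0$ at distance $k$.

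Your self-flagged dictionary items are also fine. For (ii), two cells of $[\lambda/\mu]$ on the same antidiagonal force a $2\times 2$ block because Young diagrams are closed under moving north or west (so the two intermediate cells lie in $[\lambda]$ and outside $[\mu]$). For (iii), once $\varepsilon\equiv -1$ on $M$ you only need connectedness in that restricted setting, and there it is immediate: the unique cells at consecutive contents $c$ and $c+1$ have $x$-coordinates differing by $\widehat{\partial}(\lambda)_{c+1}\in\{0,1\}$, hence share an edge. One small remark: your statement of (iii) as an unconditional ``connected $\Leftrightarrow$ $M$ is an interval'' is stronger than what you actually use or need; the direction ``$M$ interval $\Rightarrow$ connected'' in full generality takes a little more, but is irrelevant here since you only invoke it together with (ii).
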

	
	In the situation described in Observation~\ref{obs:peeling} we say that $\mu$ is obtained from $\lambda$ by {\em peeling} a ribbon of length $k$.
	Thus a ribbon tableau of shape $\lambda$ corresponds to a sequence of peelings of ribbons from $\lambda$,
	and similarly for $r$-partite ribbon tableaux.
	
	\begin{example}\label{ex:peeling}
		The $3$-partite ribbon tableau in Example~\ref{ex:ribbon_tableau} corresponds to the following sequence of ribbon peelings,
		where cells are labeled for clarity:
		\[
		\left( \,\,
		\begin{ytableau}
		1 & 1 & 3 & 3 \\
		1 & 3 & 3 \\
		\end{ytableau}
		\quad,\quad
		\begin{ytableau}
		4 & 4 \\
		\end{ytableau}
		\quad,\quad
		\begin{ytableau}
		2 \\
		2 \\
		\end{ytableau}
		\,\, \right)
		\quad
		\longrightarrow
		\quad
		\left( \,\,
		\begin{ytableau}
		1 & 1 & 3 & 3 \\
		1 & 3 & 3 \\
		\end{ytableau}
		\quad,\quad
		\varnothing
		\quad,\quad
		\begin{ytableau}
		2 \\
		2 \\
		\end{ytableau}
		\,\, \right) 
		\quad
		\longrightarrow
		\]
		
		\[
		\left( \,\,
		\begin{ytableau}
		1 & 1 \\
		1 \\
		\end{ytableau}
		\quad,\quad
		\varnothing
		\quad,\quad
		\begin{ytableau}
		2 \\
		2 \\
		\end{ytableau}
		\,\, \right)
		\quad
		\longrightarrow
		\quad
		\left( \,\,
		\begin{ytableau}
		1 & 1 \\
		1 \\
		\end{ytableau}
		\quad,\quad
		\varnothing
		\quad,\quad
		\varnothing
		\,\, \right)
		\quad
		\longrightarrow
		\quad
		\left( \,\,
		\varnothing
		\quad,\quad
		\varnothing
		\quad,\quad
		\varnothing
		\,\, \right)
		\, .
		\]
		It also corresponds to the following sequence of $3$-tuples of boundary sequences, where the exchanged entries are marked:
		\[
		(111010, \check{1}1\check{0}, 100)
		\,
		\longrightarrow
		\,
		(1\check{1}101\check{0}, \varnothing, 100)
		\,
		\longrightarrow
		\,
		(1010, \varnothing, \check{1}0\check{0})
		\,
		\longrightarrow
		\,
		(\check{1}01\check{0}, \varnothing, \varnothing)
		\,
		\longrightarrow
		\,
		(\varnothing, \varnothing, \varnothing).
		\]
	\end{example}
	
	\smallskip
	
	So far we have described two of the three changes in interpretation that we intend to introduce; 
	let us now describe the third.
	
	\begin{definition}
		Let $\delta = (\delta_1, \ldots, \delta_t)$ be a finite 0/1 sequence containing both $0$-s and $1$-s. Define
		\[
		m_i := |\{1 \le j \le i \,:\, \delta_j = 1\}|
		- |\{i+1 \le j \le t \,:\, \delta_j = 0 \}|
		\qquad (1 \le i \le t),
		\]
		namely the number of $1$-s weakly preceding $\delta_i$ minus the number of $0$-s strictly succeeding $\delta_i$.
	\end{definition}
	
	\begin{observation}
		\[
		m_{i+1} - m_i = 1
		\qquad (1 \le i \le t-1)
		\]
		and $m_1 \le 0 < m_t$.
		Therefore there is a unique index $1 \le i \le t-1$ satisfying $m_i = 0$. 
		The position between indices $i$ and $i+1$ is called the {\em anchor} of the sequence;
		the number of $1$-s preceding it is equal to the number of $0$-s succeeding it.
		This position is invariant under addition of leading $0$-s and trailing $1$-s to the sequence.
	\end{observation}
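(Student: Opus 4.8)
The plan is to read off all four assertions directly from the definition of $m_i$; no combinatorial model or further machinery is needed, only careful counting. I would dispatch the claims in the order in which they appear in the statement.

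\emph{The recurrence.} To prove $m_{i+1}-m_i=1$ for $1\le i\le t-1$, I compare the two defining differences term by term. Passing from $m_i$ to $m_{i+1}$, the set $\{1\le j\le i:\delta_j=1\}$ grows by the single element $i+1$ exactly when $\delta_{i+1}=1$, contributing $+1$; and the set $\{i+1\le j\le t:\delta_j=0\}$ shrinks by the single element $i+1$ exactly when $\delta_{i+1}=0$, which (since it is subtracted) again contributes $+1$ to $m$. As $\delta_{i+1}\in\{0,1\}$, precisely one of these occurs, so the net change is $+1$ in every case.

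\emph{The boundary inequalities.} Since $\delta$ contains at least one $1$, while for $i=t$ the set $\{i+1\le j\le t:\delta_j=0\}$ is empty, we get $m_t=|\{1\le j\le t:\delta_j=1\}|\ge 1>0$. For $m_1$: if $\delta_1=0$ then $m_1\le 0$ is immediate, and if $\delta_1=1$ then, because $\delta$ also contains a $0$ and that $0$ must lie among $\delta_2,\dots,\delta_t$, we obtain $m_1\le 1-1=0$.

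\emph{The anchor and its invariance.} By the recurrence, $m_1,\dots,m_t$ is an integer sequence with unit increments; by the inequalities it starts $\le 0$ and ends $\ge 1$. Hence it takes the value $0$ at exactly one index, and that index $i$ satisfies $i\le t-1$. Writing the equation $m_i=0$ out gives $|\{1\le j\le i:\delta_j=1\}|=|\{i+1\le j\le t:\delta_j=0\}|$, which is precisely the assertion that the number of $1$-s preceding the anchor equals the number of $0$-s succeeding it. For invariance, I would check that prepending a $0$ to $\delta$ turns the sequence $m$ into $m'$ with $m'_{k+1}=m_k$ (the new leading $0$ lies to the left of indices $k+1,\dots,t+1$, so it alters neither of the two counts), while appending a $1$ turns $m$ into $m'$ with $m'_k=m_k$ for $k\le t-1$ (the new trailing $1$ contributes to neither count); in both cases the unique zero of $m'$ occupies the same gap between the original cells of $\delta$, and iterating handles finitely many added $0$-s and $1$-s. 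The only part needing a moment's attention is this last reindexing — verifying that the prepended $0$-s land strictly before the anchor and the appended $1$-s strictly after it, so that the anchor position is literally unchanged — but everything else is a direct and routine count.
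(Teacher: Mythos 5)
Your proposal is correct. The paper states this as an Observation and omits the proof entirely, and your argument is exactly the routine direct verification that is intended: the unit-increment recurrence, the boundary inequalities from the presence of at least one $0$ and one $1$, and the reindexing check for invariance are all sound (your restriction $m'_k=m_k$ for $k\le t-1$ when appending a trailing $1$ actually holds for all $k\le t$, but this does not affect anything).
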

	
	\begin{example} 
		Here is a 0/1 sequence, with its anchor denoted by the separator ``$|$'':
		\[
		\begin{matrix}
		i: & 1 & 2 & | & 3 & 4 & 5 \\
		\delta_i: & 1 & 0 & | & 1 & 1 & 0 \\
		m_i: & -1 & 0 & | & 1 & 2 & 3 \\
		\end{matrix}
		\]
		and here is the same sequence with some leading $0$-s and trailing $1$-s added:
		\[
		\begin{matrix}
		i: & 1 & 2 & 3 & 4 & | & 5 & 6 & 7 & 8 \\
		\delta_i: & 0 & 0 & 1 & 0 & | & 1 & 1 & 0 & 1 \\
		m_i: & -3 & -2 & -1 & 0 & | & 1 & 2 & 3 & 4 \\
		\end{matrix}
		\]
	\end{example}
	
	\begin{definition}\label{def:quotient}
		Let $\Par$ be the set of all partitions of integers, including the empty partition, and let $r$ be a positive integer.
		Define a function $\varphi_r : \Par^r \to \Par$,
		on $r$-tuples of partitions, as follows:
		for $\lambda^{(0)}, \ldots, \lambda^{(r-1)} \in \Par$,
		the partition $\lambda = \varphi_r(\lambda^{(0)}, \ldots, \lambda^{(r-1)})$ is obtained by the following procedure.
		\begin{enumerate}
			\item 
			Consider the $r$ boundary sequences $\partial(\lambda^{(0)}), \ldots, \partial(\lambda^{(r-1)})$.
			\item
			Add to these sequences leading $0$-s and trailing $1$-s such that the resulting sequences $s^{(0)}, \ldots, s^{(r-1)}$ have the same length $t$ and the same position of the anchor.
			\item
			Merge the sequences $s^{(0)}, \ldots, s^{(r-1)}$ into a single sequence $s$ of length $rt$, in an interlacing fashion: 
			\[
			s_1^{(0)}, s_1^{(1)}, \ldots, s_1^{(r-1)}, s_2^{(0)}, s_2^{(1)}, \ldots, s_2^{(r-1)}, \ldots,
			s_t^{(0)}, s_t^{(1)}, \ldots, s_t^{(r-1)}.
			\]
			\item
			Let $\lambda$ be the unique partition such that $\partial(\lambda)$ is equal to $s$, with leading $0$-s and trailing $1$-s removed.
		\end{enumerate}
	\end{definition}
	
	\begin{remark}\label{rem:r-to-1}
		The function $\varphi_r$ is well-defined, namely independent of the precise lengthening of the sequences is step (2). 
		It is injective, but (for $r > 1$) not surjective. Its image, denoted $\Par_r$, consists of all partitions with an {\em empty $r$-core}, namely partitions which can be reduced to the empty partition by some sequence of peelings of ribbons of length $r$.
		For each $\lambda \in \Par_r$, 
		the unique preimage $\varphi_r^{-1}(\lambda) = (\lambda^{(0)}, \ldots, \lambda^{(r-1)})$ 
		is also called the {\em $r$-quotient} of $\lambda$.
		Note that,
		by Observation~\ref{obs:peeling},
		peeling a ribbon of length $r$ from $\lambda$ is equivalent to peeling a ribbon of length $1$, namely a single cell, from one of $\lambda^{(0)}, \ldots, \lambda^{(r-1)}$.
		It follows that $|\lambda| = r \cdot (|\lambda^{(0)}| + \ldots + |\lambda^{(r-1)}|)$ for $\lambda \in \Par_r$.
		For more details see, e.g., \cite[\S 9]{AR}.
		Note that the convention there differs slightly from that of~\cite{LPA}, which follows the abacus interpretation of~\cite[Section 2.7]{JK}; 
		the difference amounts to a cyclic shift of the $r$-quotient.
	\end{remark}
	
	\begin{example}\label{ex:phi}
		For $r = 3$ and the shapes in Example~\ref{ex:peeling},
		\[
		\begin{matrix}
		\lambda^{(0)} = (4,3) & \mapsto & \partial(\lambda^{(0)}) = 11|1010 & \mapsto & s^{(0)} = 11|1010 \\
		\lambda^{(1)} = (2) & \mapsto & \partial(\lambda^{(1)}) = 1|10 & \mapsto & s^{(1)} = 01|1011 \\
		\lambda^{(2)} = (1,1) & \mapsto & \partial(\lambda^{(2)}) = 10|0 & \mapsto & s^{(2)} = 10|0111 \\
		\end{matrix}
		\]
		and therefore $\lambda = \varphi_3(\lambda^{(0)},\lambda^{(1)},\lambda^{(2)})$ is obtained by
		\[
		\begin{matrix}
		s = 101110|110001111011 & \mapsto & \partial(\lambda) = 101110|1100011110 & \mapsto & \lambda = (10,6,6,6,4,1).
		\end{matrix}
		\]
		Indeed,
		$|\lambda^{(0)}| + |\lambda^{(1)}| + |\lambda^{(2)}| = 7 + 2 + 2 = 11$
		and $|\lambda| = 33 = 3 \cdot 11$.
	\end{example}

	\smallskip
	
	We now restate Theorem~\ref{thm:MN} in the special case where the colors of all cycles are zero.
	Note that, by Remark~\ref{rem:r-to-1}, the irreducible characters of $\ZZ_r \wr S_n$ can be indexed by partitions $\lambda$ of $rn$ with an empty $r$-core,
	instead of $r$-partite partitions of $n$.
	
	\begin{theorem}\label{thm:MN_boundary}
		Let $\lambda$ be a partition of $rn$ with an empty $r$-core,
		and let $\mu=(\mu_1,\dots,\mu_t)$ be a composition of $n$.
		The character $\psi^\lambda_{(\mu,\varnothing,\dots,\varnothing)}$ is equal to the sum of values obtained by all possible applications of the following ``peeling algorithm'':
		
		\smallskip
		
		Initialization: $\mu:=(\mu_1,\dots,\mu_t)$, 
		$\delta := \partial(\lambda)$, 
		and $\epsilon := 1$.
		
		\smallskip
		
		Main loop:
		
		\begin{enumerate}
			\item
			If $t = 0$ then end the loop and output $\epsilon$.
			\item 
			Choose an index $q$ such that $\delta_q=1$ and $\delta_{q+r\mu_t}=0$. 
			If there is no such index, set $\epsilon := 0$ and end the loop. [This is the case of an unsuccessful peeling.]
			\item 
			Redefine $\delta$ by switching the two entries, i.e., letting $\delta_q := 0$ and $\delta_{q+r\mu_t} := 1$. 
			\item
			Multiply $\epsilon$ by $-1$ if  the number of zeros in $\delta$ between the switched entries, in positions congruent to $q \pmod r$ only, is odd (and by 1 otherwise). 
			\item
			Redefine $\mu:=(\mu_1,\dots,\mu_{t-1})$, $t := t-1$.
			\item
			Go to step (1).
		\end{enumerate}
	\end{theorem}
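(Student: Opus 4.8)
The plan is to deduce Theorem~\ref{thm:MN_boundary} from Theorem~\ref{thm:MN} by unwinding the definitions of $\partial$, of the anchor, and of $\varphi_r$, proceeding by induction on the number $t$ of parts of $\mu$. First I would specialize Theorem~\ref{thm:MN} to the case in which all cycle colors vanish: every factor $\omega^{f_{\bT}(i)\cdot z(c_i)}$ is then $1$, and, fixing the ordering of the cycles of $\pi$ so that the $i$-th cycle has length $\mu_i$ (and color $0$), the rule reads
\[
\psi^{\blambda}(\pi) \;=\; \sum_{\bT \in \bRT_c(\blambda)} \prod_{i=1}^{t} (-1)^{ht_{\bT}(i)},
\qquad \blambda := \varphi_r^{-1}(\lambda).
\]
Reading a ribbon tableau $\bT:\ \varnothing = \blambda^{(0)} \subseteq \dots \subseteq \blambda^{(t)} = \blambda$ backwards as a sequence of peelings, the first ribbon peeled is $\bb := \blambda \setminus \blambda^{(t-1)}$, of length $\mu_t$; grouping the sum according to $\bb$ yields the recursion
\[
\psi^{\blambda}(\pi) \;=\; \sum_{\bb} (-1)^{ht(\bb)}\, \psi^{\blambda \setminus \bb}(\pi'),
\]
where $\bb$ ranges over length-$\mu_t$ ribbons removable from a single component of $\blambda$, and $\pi'$ is $\pi$ with one $\mu_t$-cycle deleted. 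This is exactly the effect of one pass through the main loop of the peeling algorithm, so the theorem will follow once the one-step data on the two sides are matched.

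For the single step I would use Observation~\ref{obs:peeling} together with Definition~\ref{def:quotient}. Write $s^{(0)},\dots,s^{(r-1)}$ for the padded boundary sequences of the components of $\blambda$ and $s$ for their interlacing, so that $\delta := \partial(\lambda)$ is $s$ with leading $0$-s and trailing $1$-s deleted; the entry $s^{(j)}_a$ occupies position $(a-1)r+j+1$ of $s$, a position $\equiv j+1 \pmod r$. By Observation~\ref{obs:peeling}, removing a length-$\mu_t$ ribbon from the component $\lambda_j$ means exchanging a $1$ in position $a$ and a $0$ in position $a+\mu_t$ of $s^{(j)}$; inside $s$, hence (after deleting the fixed leading $0$-s and trailing $1$-s) inside $\delta$, this is an exchange of a $1$ in some position $q$ and a $0$ in position $q+r\mu_t$, with $q$ and $q+r\mu_t$ in the same residue class mod $r$. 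Conversely, since $r \mid r\mu_t$, any such exchange in $\delta$ has both endpoints in one residue class and therefore comes from a single component. Thus the admissible choices of $q$ in step (2) are in bijection with the removable ribbons $\bb$, and an unsuccessful peeling ($\epsilon := 0$) corresponds to the absence of such $\bb$, i.e.\ to a missing term in the recursion. For the sign, a standard refinement of Observation~\ref{obs:peeling} identifies $ht(\bb)$ with the number of $0$-s of $s^{(j)}$ strictly between positions $a$ and $a+\mu_t$; tracing these through the interlacing, they are exactly the $0$-s of $\delta$ strictly between $q$ and $q+r\mu_t$ in positions $\equiv q \pmod r$, so $(-1)^{ht(\bb)}$ is precisely the factor applied to $\epsilon$ in step (4). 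Finally, a direct check shows that exchanging a $1$ and a $0$ leaves every $m_i$ unchanged, hence preserves the anchor; so the components of $\blambda \setminus \bb$ may be given the padded boundary sequences obtained from $s^{(0)},\dots,s^{(r-1)}$ by that single exchange, and re-interlacing shows that $\varphi_r(\blambda \setminus \bb)$ is the partition whose boundary sequence is $\delta$ with the two entries switched (and re-canonicalized) --- exactly the updated $\delta$ of step (3).

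Granting these facts, one pass of the loop sends $(\delta,\mu,\epsilon)$ to $\bigl(\delta^{\mathrm{new}},(\mu_1,\dots,\mu_{t-1}),\pm\epsilon\bigr)$ matching the recursion for $\psi^{\blambda}(\pi)$ term by term, while the base case $t=0$ forces $\lambda=\varnothing$ and output $1=\psi^{\varnothing}$; summing over all runs therefore gives $\psi^{\lambda}_{(\mu,\varnothing,\dots,\varnothing)}$. The one genuinely delicate point --- which I expect to be the main obstacle, and would isolate as a small lemma --- is that the algorithm never re-canonicalizes $\delta$ after a switch (it does not strip newly created leading $0$-s or trailing $1$-s), whereas the inductive hypothesis is applied to the canonical sequence $\partial(\blambda \setminus \bb)$. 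This is harmless: any leading $0$ of $\delta$ sits in a position smaller than every admissible $q$, and any trailing $1$ in a position larger than every $q+r\mu_t$, so they affect neither the set of admissible $q$ nor the count of intermediate $0$-s; and passing from $\delta$ to $\delta$ with $a$ extra leading $0$-s shifts all positions by $a$, which merely relabels the residue class ``$\equiv q \pmod r$'' of step (4) without changing its parity. Hence the algorithm produces the same multiset of outputs on a non-canonical $\delta$ as on its canonical form, and the induction closes.
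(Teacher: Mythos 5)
Your proposal is correct and follows essentially the same route as the paper's own proof: specialize Theorem~\ref{thm:MN} to zero colors, read each $r$-partite ribbon tableau as a peeling sequence, translate a length-$\mu_t$ peeling of one component into the switch $\delta_q=1 \leftrightarrow \delta_{q+r\mu_t}=0$ in $\partial(\lambda)$ via the interlacing of Definition~\ref{def:quotient}, and identify the height with the count of intermediate zeros in the residue class of $q$. You simply carry out this argument in more detail than the paper does (explicit induction on $t$, the position arithmetic $s^{(j)}_a \mapsto (a-1)r+j+1$, and the harmlessness of not re-canonicalizing $\delta$), all of which checks out.
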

	
	\begin{remark}
		There is a choice in step (2) of the algorithm.
		Each successful round of the main loop (ending with $t = 0$)  is called a {\em $\mu$-peeling}, and contributes a summand $\epsilon = \pm 1$ to $\psi^\lambda_{(\mu,\varnothing,\dots,\varnothing)}$.
		%
	\end{remark}
	\begin{proof}
		Let $\pi \in \ZZ_r \wr S_n$ belong to the conjugacy class corresponding to $(\mu,\varnothing,\dots,\varnothing)$.
		Each ordering $(c_1, \ldots, c_t)$ of the cycles of $\pi$ with lengths $\ell(c_i) = \mu_i$ $(\forall i)$ has, by assumption, colors $z(c_i) = 0$ $(\forall i)$.
		Therefore the formula in Theorem~\ref{thm:MN} reduces to
		\[
		\psi^\blambda(\pi) = \sum_{\bT \in \bRT_c(\blambda)} \prod_{i=1}^{t} (-1)^{ht_{\bT}(i)} .
		\]
		Each $r$-partite ribbon tableau  $\bT \in \bRT_c(\blambda)$ corresponds to a sequence of ribbon peelings of the $r$-partite partition $\blambda = (\lambda^{(0)},\dots,\lambda^{(r-1)})$.
		Peeling a ribbon of length $\mu_t$ from $\lambda^{(j)}$ is equivalent to 
		switching two entries $\delta_k = 1$ and $\delta_{k+\mu_t} = 0$ in $\partial(\lambda^{(j)})$. 
		The relevant height $ht_{\bT}(t)$ is the number of zeros in $\partial(\lambda^{(j)})$ strictly between $\delta_k$ and $\delta_{k+\mu_t}$.
		This can be restated in terms of
		$\lambda = \varphi_r(\lambda^{(0)},\dots,\lambda^{(r-1)})$:
		by Definition~\ref{def:quotient},
		peeling the ribbon corresponds to switching two entries $\delta_q = 1$ and $\delta_{q+r\mu_t} = 0$ in $\partial(\lambda)$, for a suitable index $q$.
		The height $ht_{\bT}(t)$ is the number of zeros in $\partial(\lambda)$ strictly between $\delta_q$ and $\delta_{q+\mu_t}$, but only in positions congruent to $q \pmod r$. This explains step $(4)$ of the algorithm.
	\end{proof}
	
	\begin{example}
		The peeling in Example~\ref{ex:peeling}, viewed as a peeling of $\lambda = \varphi_3(\lambda^{(0)},\lambda^{(1)},\lambda^{(2)})$, as in Example~\ref{ex:phi}, is
		\[
		1011\check{1}0|1100\check{0}11110
		\,
		\longrightarrow
		\,
		101\check{1}00|110011111\check{0}
		\,
		\longrightarrow
		\,
		10\check{1}000|11\check{0}0
		\,
		\longrightarrow
		\,
		\check{1}00000|111\check{0}
		\,
		\longrightarrow
		\,
		| = \varnothing
		\]
		The corresponding numbers of zeros, in intermediate positions with the correct remainder $\!\!\pmod 3$, are
		0, 1, 1, and 1. The contribution to the character value is therefore $(-1)^{0+1+1+1} = -1$.
	\end{example}

	\section{Main Theorem}\label{sec:main}
	
	Noting that $B_n\cong \ZZ_2\wr S_n$, we state the following 
	generalization of Theorem~\ref{thm:main}.
	
	
	\begin{theorem}\label{thm:main_r}
		For every positive integer $r$
		there exists a function $\sign_r : \Par_r \to \{1,-1\}$ such that, 
		for every $r$-partite partition $\blambda=(\lambda^{(0)},\ldots,\lambda^{(r-1)})$ of a positive integer $n$
		and every composition $\mu = (\mu_1,\ldots,\mu_t)$ of $n$: 
		\[
		\psi^{(\lambda^{(0)},\ldots,\lambda^{(r-1)})}_{(\mu,\varnothing,\ldots,\varnothing)} = \sign_r(\lambda)\cdot \chi^\lambda_{r\mu},
		\]
		where 
		$\psi^{(\lambda^{(0)},\ldots,\lambda^{(r-1)})}$ is the irreducible $\ZZ_r \wr S_n$-character indexed by $(\lambda^{(0)},\ldots,\lambda^{(r-1)})$,
		$\chi^\lambda$ is the irreducible $S_{rn}$-character indexed by $\lambda := \varphi_r(\lambda^{(0)}, \ldots, \lambda^{(r-1)}) \in \Par_r$,
		and $r\mu := (r\mu_1,\dots,r\mu_t)$.
	\end{theorem}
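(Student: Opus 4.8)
The plan is to evaluate both sides by the Murnaghan--Nakayama rule, observe that the two resulting sums are indexed by the \emph{same} set of peelings of $\partial(\lambda)$ and differ only in the sign attached to each peeling, and then show that the quotient of the two signs is a constant depending only on $\lambda$. Fix $\mu=(\mu_1,\dots,\mu_t)$. By Theorem~\ref{thm:MN_boundary}, $\psi^{\blambda}_{(\mu,\varnothing,\dots,\varnothing)}=\sum_P(-1)^{a(P)}$, where $P$ runs over the successful runs of the peeling algorithm applied to $\delta=\partial(\lambda)$ with successive peeling distances $r\mu_t,r\mu_{t-1},\dots,r\mu_1$, and $a(P)=\sum_i a_i(P)$ with $a_i(P)$ the quantity counted in step~(4) of the $i$-th peeling (zeros strictly between the two switched entries lying in positions congruent, modulo $r$, to the left switched one). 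On the other side, by the ordinary Murnaghan--Nakayama rule (the case $r=1$ of Theorem~\ref{thm:MN}, applied to an element of $S_{rn}$ of cycle type $r\mu$ whose cycles are ordered by decreasing index), $\chi^{\lambda}_{r\mu}=\sum_P(-1)^{b(P)}$; by Observation~\ref{obs:peeling}, peeling a ribbon of length $r\mu_i$ off the current shape is exactly choosing two entries of $\partial(\lambda)$ at distance $r\mu_i$ to switch, which is precisely the choice made in step~(2), so $P$ ranges over the \emph{same} set of runs, and $b(P)=\sum_i b_i(P)$ with $b_i(P)$ the number of \emph{all} zeros strictly between the two switched entries of the $i$-th peeling. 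It therefore suffices to prove that $b(P)-a(P)\bmod 2$ depends only on $\lambda$, and not on $\mu$ or on $P$; one then sets $\sign_r(\lambda):=(-1)^{b(P)-a(P)}$.

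To this end, recall from the proof of Theorem~\ref{thm:MN_boundary} that the $i$-th peeling, switching entries of the current $\delta$ at positions $q$ and $q+r\mu_i$ with $q\equiv j\pmod r$, corresponds inside the $r$-quotient $(\lambda^{(0)},\dots,\lambda^{(r-1)})$ to peeling a ribbon of length $\mu_i$ off $\lambda^{(j)}$; then $a_i(P)$ is the height of that ribbon in $\lambda^{(j)}$ while $b_i(P)$ is the height of the length-$r\mu_i$ ribbon in $\lambda$. Strictly between positions $q$ and $q+r\mu_i$ there lie, for each residue $j'\neq j$, exactly $\mu_i$ positions, and as a subsequence of $\delta$ these form a window $W_{j'}$ of $\mu_i$ consecutive entries of the (suitably padded) boundary sequence of the current $\lambda^{(j')}$; hence $b_i(P)-a_i(P)=\sum_{j'\neq j}z(W_{j'})$, where $z(W)$ is the number of zeros in $W$. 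Now refine $P$ into a run $\widetilde P$ of the peeling algorithm for the all-ones composition $(1^n)$, replacing, for each $i$, the length-$\mu_i$ ribbon of $\lambda^{(j)}$ by any cell-by-cell peeling of it --- which in $\lambda$ replaces the $i$-th peeling by $\mu_i$ peelings of length-$r$ ribbons. A short count (for each position $p$ of the window peeled from $\lambda^{(j)}$, exactly one of these $\mu_i$ peelings uses $p$ as its left switched position, regardless of the order chosen) shows that, for every $j'\neq j$, the ``middle'' entries (those in residue class $j'$) of these $\mu_i$ length-$r$ ribbons run exactly through $W_{j'}$, so their heights sum to $\sum_{j'\neq j}z(W_{j'})$. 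Since single-cell ribbons have height $0$, we get $a(\widetilde P)=0$ and $b(P)-a(P)=b(\widetilde P)$, an \emph{exact} identity valid for every $P$ and every such refinement.

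It remains only to note that $b(\widetilde P)\bmod 2$ is the same for all runs $\widetilde P$ of the $(1^n)$-algorithm. These are the ways of peeling $\lambda$ to $\varnothing$ by $n$ ribbons of length $r$; their number equals $\binom{n}{|\lambda^{(0)}|,\dots,|\lambda^{(r-1)}|}\prod_j f^{\lambda^{(j)}}=\deg\psi^{\blambda}$ (with $f^{\nu}$ the number of standard Young tableaux of shape $\nu$), while $\sum_{\widetilde P}(-1)^{b(\widetilde P)}=\chi^{\lambda}_{(r^n)}$ by the Murnaghan--Nakayama rule. It is classical that $\chi^{\lambda}_{(r^n)}=\pm\deg\psi^{\blambda}$: for $r=2$ this is the identity recalled in the introduction ($\chi^{\lambda}$ at the longest element of $S_{2n}$), and in general it is the analogous fact for $\ZZ_r\wr S_n$, equivalently the statement that the adjoint, with respect to the Hall inner product, of plethysm by $p_r$ sends $s_{\lambda}$ to $\pm s_{\lambda^{(0)}}\cdots s_{\lambda^{(r-1)}}$. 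Hence all the terms $(-1)^{b(\widetilde P)}$ are equal; calling their common value $\sign_r(\lambda)$ we obtain $b(P)-a(P)\equiv b(\widetilde P)\pmod 2$ constant in $P$ and $\mu$, and therefore $\psi^{\blambda}_{(\mu,\varnothing,\dots,\varnothing)}=\sum_P(-1)^{a(P)}=\sign_r(\lambda)\sum_P(-1)^{b(P)}=\sign_r(\lambda)\,\chi^{\lambda}_{r\mu}$. The genuinely delicate points are the identification of the two Murnaghan--Nakayama expansions with sums over one and the same index set, and the exact matching between the zeros lying ``under'' a single length-$r\mu_i$ ribbon and those produced when that ribbon is refined into unit ribbons; the rest is routine bookkeeping with the interlaced boundary sequence.
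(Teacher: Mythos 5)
Your proof is correct, but it takes a genuinely different route from the paper's at the decisive step. Both arguments begin the same way: via Theorem~\ref{thm:MN_boundary} and Observation~\ref{obs:peeling}, the two Murnaghan--Nakayama expansions are sums over one and the same set of peelings $P$, and the whole problem is to show that the parity of $b(P)-a(P)$ (all zeros between the switched entries versus only the same-residue ones) depends on $\lambda$ alone. The paper handles this by induction on $n$, one ribbon at a time: it introduces the row-color sequence $a^{(k)}(\lambda)$ of residues of the $\beta$-numbers, shows that each peeling step cyclically shifts a segment of it, and identifies the parity of $n_3=b_i-a_i$ with the change in the $r$-inversion number, so that $\sign_r(\lambda)=(-1)^{\inv_r^{(k)}(\lambda)-\inv_r^{(k)}(\varnothing)}$ comes out explicitly (Definition~\ref{def.inv_r}, Proposition~\ref{prop:mainr}). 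You instead refine each length-$r\mu_i$ ribbon into $\mu_i$ unit ribbons of length $r$ and prove the exact identity $b(P)-a(P)=b(\widetilde P)$; your key counting claim (each position of the quotient window serves as the left switched position of exactly one unit peeling, in any order) is correct --- it follows from the order-preservation of the $10\to01$ hops --- though you state it without proof. This cleanly reduces everything to the all-unit composition, i.e.\ to the assertion that all standard $r$-ribbon tableaux of shape $\lambda$ carry the same sign, which you then obtain by citing $\chi^\lambda_{(r^n)}=\pm\deg\psi^{\blambda}$. That citation is the one point to be aware of: the fact is indeed classical (Littlewood; James--Kerber), but it is precisely the $\mu=(1^n)$ special case of the theorem you are proving, so your argument is really a reduction of the general case to that special case plus an external reference; a self-contained combinatorial proof of the cited fact would require essentially the paper's inversion argument anyway. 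The trade-off is clear: your route is structurally elegant and isolates exactly where the sign lives, but it is not self-contained and yields no explicit formula for $\sign_r(\lambda)$, whereas the paper's induction produces the concrete description via $\inv_r^{(k)}$ that Section~\ref{sec:additional} then exploits.
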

	
	Let us start with a sequence of observations and definitions, leading to an explicit expression for $\sign_r(\lambda)$ in Definition~\ref{def.inv_r} and Lemma~\ref{t:sign_independent_of_k}.
	Then Proposition~\ref{prop:mainr} will imply Theorem~\ref{thm:main_r}.
	
	%
	%
	
	
	As remarked before Observation~\ref{obs:peeling} above,
	if $\lambda$ is any partition then
	the zeros in the boundary sequence $\partial(\lambda)$ correspond to  
	the parts of $\lambda$, in reverse order;
	equivalently, to the rows of diagram of $\lambda$, ordered from bottom to top.
	In the sequel it will be convenient to fix a positive integer $k$ and consider partitions with {\em at most} $k$ parts, namely $\lambda = (\ell_1, \ldots, \ell_k)$ where $\ell_1 \ge \ldots \ge \ell_k \ge 0$.
	We thus require the boundary sequence $\partial (\lambda)$ to contain exactly $k$ zeros, by allowing leading zeros. 
	
	\begin{observation}\label{obs:a}
		If $\lambda = (\ell_1, \ldots, \ell_k)$ 
		where $\ell_1 \ge \ldots \ge \ell_k \ge 0$,
		then the position in $\partial(\lambda)$ of the zero corresponding to $\ell_i$ $(1 \le i \le k)$ is equal to $\ell_i + k - i + 1$; 
		there are $\ell_i$ ones and $k-i$ zeros preceding it.
	\end{observation}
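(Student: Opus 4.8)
The plan is to read everything off directly from the lattice-path description of $\partial(\lambda)$. Draw the diagram of $\lambda=(\ell_1,\dots,\ell_k)$ in the plane so that, for $1\le i\le k$, row $i$ occupies the horizontal strip $k-i\le y\le k-i+1$ and its cells sit in columns $1,\dots,\ell_i$. Then the southwestern corner is the origin $(0,0)$, the northeastern corner is $(\ell_1,k)$, and the southeastern boundary, traversed from $(0,0)$ to $(\ell_1,k)$, is a lattice path using only unit East steps (encoded $1$) and unit North steps (encoded $0$); because $\ell_1\ge\dots\ge\ell_k$, this path never moves West or South. Empty rows are accommodated by the leading-zeros convention: if the last $k-m$ rows of $\lambda$ have length $0$, start the path at $(0,0)$ and let it climb North along the $y$-axis to the true southwestern corner $(0,k-m)$ before proceeding, so that $\partial(\lambda)$ has exactly $k$ zeros and the path reaches each point $(\ell_i,k-i)$ uniformly in $i$.

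First I would identify which entry of $\partial(\lambda)$ is the zero corresponding to $\ell_i$, i.e.\ to row $i$. Reading $\partial(\lambda)$ from left to right lists the North steps of the path in the order they are taken, which correspond to the rows of $\lambda$ from bottom to top; so the zero for row $i$ is the $(k-i+1)$-st zero of $\partial(\lambda)$, and geometrically it is the North step from $(\ell_i,k-i)$ to $(\ell_i,k-i+1)$ --- the right edge of the last cell of row $i$ when $\ell_i>0$, and one of the initial North steps along the $y$-axis when $\ell_i=0$. This step does lie on the path: for $i<k$ the path is at $(\ell_{i+1},k-i)$ right after finishing row $i+1$ and then takes $\ell_i-\ell_{i+1}\ge 0$ East steps to $(\ell_i,k-i)$, while for $i=k$ the path starts at $(0,0)$ and takes $\ell_k$ East steps to $(\ell_k,0)$.

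It then remains to count. Just before taking the North step from $(\ell_i,k-i)$, the path has gone from $(0,0)$ to $(\ell_i,k-i)$ using only unit East and North steps, hence it has used exactly $\ell_i$ East steps and $k-i$ North steps, a total of $\ell_i+(k-i)$ steps; so the zero for $\ell_i$ sits in position $\ell_i+(k-i)+1$ and is preceded by precisely those $\ell_i$ ones and $k-i$ zeros, which is the assertion. I do not expect any genuine difficulty here: the argument is just an unwinding of the definition of $\partial$, and the only point needing a little care is making the treatment of empty rows consistent with the generic case, which the coordinate bookkeeping above is designed to handle. Alternatively, one could induct on the length $\ell_1+k$ of $\partial(\lambda)$, using that deleting the leftmost column of $\lambda$ removes the leftmost $1$ from $\partial(\lambda)$, while deleting an empty bottom row removes the leftmost $0$.
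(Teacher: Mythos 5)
Your argument is correct: it is exactly the direct unwinding of the definition of $\partial(\lambda)$ that the paper leaves implicit (the statement is an unproved Observation there), with the leading-zeros convention for empty rows handled properly. Nothing further is needed.
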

	
	\begin{definition}\label{def.a_sequence}
		Let $\lambda = (\ell_1, \ldots, \ell_k)$ with $\ell_1 \ge \ldots \ge \ell_k \ge 0$.
		For each $1 \le i \le k$, let $0 \le a_i \le r-1$ be the remainder obtained upon dividing $\ell_i + k - i$ by $r$.
		The (length $k$) {\em row-color sequence} of $\lambda$ is $a^{(k)}(\lambda) := (a_1, \ldots, a_k)$. 
	\end{definition} 
	
	\begin{remark}
		The numbers $\ell_i +k - i$ are called {\em $\beta$-numbers} in~\cite{JK, LPA}.
	\end{remark}
	
	Let $\Par_r(rn)$ denote the set of all partitions of $rn$ with an empty $r$-core.
	
	\begin{lemma}\label{t:permutation}
		If $\lambda = (\ell_1, \ldots, \ell_k)$ has an empty $r$-core and the empty partition $\varnothing$ is represented as a sequence of $k$ zeros,
		then the sequence $a^{(k)}(\lambda)$ is a permutation of the sequence $a^{(k)}(\varnothing)$. 
	\end{lemma}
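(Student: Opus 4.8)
The plan is to pass to $\beta$-numbers and use the elementary fact that peeling a ribbon of length $r$ preserves all residues modulo $r$.

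First I would rewrite both row-color sequences in terms of $\beta$-numbers. Write $\beta_i := \ell_i + k - i$ for $\lambda = (\ell_1,\dots,\ell_k)$, so that $\beta_1 > \dots > \beta_k \ge 0$ and, by Definition~\ref{def.a_sequence}, $a^{(k)}(\lambda) = (\beta_1 \bmod r,\dots,\beta_k \bmod r)$; by Observation~\ref{obs:a} these $\beta_i$ are exactly the positions of the zeros of $\partial(\lambda)$, minus $1$. For $\varnothing$ represented by $k$ zeros all $\ell_i$ vanish, so its $\beta$-numbers are $k-1,k-2,\dots,0$ and $a^{(k)}(\varnothing) = ((k-1)\bmod r,\dots,1\bmod r,0)$. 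In particular the multiset of entries of $a^{(k)}(\varnothing)$ is precisely the multiset of residues modulo $r$ of $\{0,1,\dots,k-1\}$, and the lemma is equivalent to the assertion that the multiset of $\beta$-numbers of $\lambda$ has the same residues modulo $r$ as $\{0,1,\dots,k-1\}$.

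Next I would examine one peeling step. Keeping the boundary sequence padded throughout so that it always contains exactly $k$ zeros, Observation~\ref{obs:peeling} says that peeling a ribbon of length $r$ exchanges an entry $\delta_q = 1$ with an entry $\delta_{q+r} = 0$; this moves a single zero from position $q+r$ to position $q$, i.e., it replaces one $\beta$-number $\beta$ by $\beta - r$ and leaves the other $k-1$ unchanged. (The padding can always be maintained, since a peeling is performed only when $\delta_{q+r}=0$ with $q\ge 1$, so no zero is ever forced off the left end.) Since $\beta \equiv \beta - r \pmod r$, the multiset $\{\beta_i \bmod r : 1 \le i \le k\}$ is unchanged by one peeling of a length-$r$ ribbon, hence by any finite sequence of them.

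Finally, since $\lambda$ has an empty $r$-core, Remark~\ref{rem:r-to-1} gives a sequence of length-$r$ ribbon peelings reducing $\lambda$ to $\varnothing$ (represented by $k$ zeros), whose $\beta$-numbers are $\{0,1,\dots,k-1\}$. By the previous paragraph the multiset of residues modulo $r$ of the $\beta$-numbers of $\lambda$ equals that of $\{0,1,\dots,k-1\}$, which by the first paragraph is exactly the statement of the lemma. The only point needing a little care is this bookkeeping of leading zeros, so that ``peel a length-$r$ ribbon'' corresponds cleanly to ``decrease one $\beta$-number by $r$''; once that normalization is in place, invariance of residues modulo $r$ is immediate, and this is really the crux of the argument rather than a genuine obstacle.
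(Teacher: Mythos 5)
Your proof is correct and follows essentially the same route as the paper: both induct on a sequence of length-$r$ ribbon peelings and use the fact that each peeling moves a single zero of the boundary sequence exactly $r$ positions to the left, so its residue class modulo $r$ is unchanged. The only cosmetic difference is that you track the multiset of $\beta$-number residues directly, whereas the paper records the effect of each peeling as a cyclic shift of a block of the row-color sequence --- a slightly finer statement that it reuses later in the proof of Proposition~\ref{prop:mainr}.
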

	
	\begin{proof}
		Assume that $\lambda = (\ell_1, \ldots, \ell_k) \in \Par_r(rn)$.
		By assumption, there is a peeling by ribbons of length $r$ which reduces $\lambda$ to the empty partition.
		It suffices to show that the sequence $a^{(k)}(\lambda)$ is a permutation of the sequence $a^{(k)}(\lambda')$, for any partition $\lambda' \in \Par_r(r(n-1))$ obtained from $\lambda$ by peeling one ribbon of length $r$.
		
		Indeed, assume that $\partial(\lambda')$ is obtained from $\partial(\lambda)$ by switching the entries $\delta_q = 1$ and $\delta_{q+r} = 0$.
		Let $i_1, \ldots, i_2$ $(1 \le i_1 \le i_2 \le k)$ be the indices of the rows in the diagram of $\lambda$ corresponding to the zeros in the interval $\delta_q, \ldots, \delta_{q+r}$.
		Note that the order is reversed;
		in particular, $i_1$ corresponds to $\delta_{q+r} = 0$, while $i_2$ corresponds to the first zero after $\delta_q = 1$.
		The switch moves a zero in $\partial(\lambda)$ from position $q+r$ to position $q$, without moving the other zeros.
		By Observation~\ref{obs:a} and Definition~\ref{def.a_sequence},
		the row-color sequence $(a_1, \ldots, a_k)$ of $\lambda$ and the row-color sequence $(a'_1, \ldots, a'_k)$ of $\lambda'$ are related by
		\[
		a'_i = 
		\begin{cases}
		a_i, & \text{if } i < i_1 \text{ or } i > i_2; \\
		a_{i+1}, & \text{if } i_1 \le i \le i_2 - 1; \\
		a_{i_1}, & \text{if } i = i_2.
		\end{cases}
		\]
		The equality $a'_{i_2} = a_{i_1}$ holds since $q-1$ and $q+r-1$ have the same remainder $\!\!\pmod r$.
		Thus the effect of this peeling step on the row-color sequence is a cyclic shift of the entries $a_{i_1}, \ldots, a_{i_2}$.
		In particular, $a^{(k)}(\lambda)$ is a permutation of $a^{(k)}(\lambda')$.
	\end{proof}
	
	\begin{example}\label{ex:color_sequence}
		Let $r = 3$, $n = 6$, and $\lambda = (5,5,4,3,1) \in \Par_3(3 \cdot 6)$.
		If $k = 5$ then 
		$(\ell_1 + 4, \ldots, \ell_5 + 0) = (9,8,6,4,1)$
		and 
		$a^{(5)}(\lambda) = (0,2,0,1,1)$.
		The corresponding representation of the empty partition $\varnothing = (0,0,0,0,0)$
		has 
		$(\ell_1 + 4, \ldots, \ell_5 + 0) = (4,3,2,1,0)$
		and 
		$a^{(5)}(\varnothing) = (1,0,2,1,0)$.
		Clearly $a^{(5)}(\lambda)$ is a permutation of $a^{(5)}(\varnothing)$.
	\end{example}
	
	\begin{definition}\label{def.inv_r}
		For $k$, $\lambda$ and $a^{(k)}(\lambda) = (a_1, \ldots, a_k)$ as in Definition~\ref{def.a_sequence},
		the {\em $r$-inversion set} of $\lambda$ is
		\[
		\Inv_r^{(k)}(\lambda) := \{ (i,j) \,:\, i < j,\, a_i > a_j \}
		\]
		and its {\em $r$-inversion number} is
		\[
		\inv_r^{(k)}(\lambda) := |\Inv_r^{(k)}(\lambda)|.
		\]
		The {\em $r$-sign} of $\lambda$ is
		\[
		\sign_r^{(k)}(\lambda) := (-1)^{\inv_r^{(k)}(\lambda) - \inv_r^{(k)}(\varnothing)},
		\]
		where $\varnothing$ is the empty partition, represented as a sequence of $k$ zeros.
	\end{definition} 
	
	\begin{observation}\label{t:count_transpositions}
		$\sign_r^{(k)}(\lambda) = -1$ if and only if the length of some (equivalently, each) sequence of transpositions transforming $a^{(k)}(\lambda)$ into $a^{(k)}(\varnothing)$ is odd,
		where only transpositions switching two {\em distinct} values in the sequence are counted.
	\end{observation}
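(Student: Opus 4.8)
The plan is to argue directly with the two words $a := a^{(k)}(\lambda)$ and $b := a^{(k)}(\varnothing)$, which are rearrangements of one another by Lemma~\ref{t:permutation}. For a finite word $w = (w_1,\dots,w_k)$ set $\inv(w) := |\{(i,j) : i < j,\ w_i > w_j\}|$; by Definition~\ref{def.inv_r} we then have $\inv_r^{(k)}(\lambda) = \inv(a)$ and $\inv_r^{(k)}(\varnothing) = \inv(b)$, hence $\sign_r^{(k)}(\lambda) = (-1)^{\inv(a) - \inv(b)}$. An exchange of two adjacent positions of a word \emph{switches two distinct values} precisely when the two entries differ, and in that case it changes the word, while otherwise it fixes it. So the statement amounts to: $a$ can be carried to $b$ by a chain of such adjacent exchanges, and in every such chain the number $N$ of value-switching steps satisfies $N \equiv \inv(a)-\inv(b) \pmod 2$ --- whence all such $N$ share a parity and $(-1)^N = \sign_r^{(k)}(\lambda)$.

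The engine is the elementary observation that a value-switching exchange of two adjacent positions changes $\inv$ by exactly $\pm 1$, since the only pair of positions whose inversion status can change is the exchanged pair itself. Granting this, both halves follow quickly. For existence: adjacent exchanges generate all rearrangements of a word, so a chain from $a$ to $b$ exists, and deleting the steps that fix the word leaves a chain of value-switching adjacent exchanges from $a$ to $b$. For parity-invariance: along any chain of adjacent exchanges from $a$ to $b$, telescoping the $\pm 1$ changes over its value-switching steps yields $\inv(b)-\inv(a)$, so the number $N$ of those steps satisfies $N \equiv \inv(b)-\inv(a) \equiv \inv(a)-\inv(b) \pmod 2$, independently of the chain. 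Therefore $(-1)^N = (-1)^{\inv(a)-\inv(b)} = \sign_r^{(k)}(\lambda)$, i.e.\ $\sign_r^{(k)}(\lambda) = -1$ iff $N$ is odd --- for some, equivalently each, such chain.

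The point that needs care is that $a^{(k)}(\lambda)$ and $a^{(k)}(\varnothing)$ generally have repeated entries, so there is no permutation canonically attached to the pair, and ``the number of transpositions taking $a$ to $b$'' is not well-defined as an integer; what is well-defined is the parity of the count of value-switching steps, and for this the exchanges must be taken to be adjacent ones (with arbitrary-position transpositions the effective-step parity is genuinely not an invariant). Once this is pinned down, the argument is just the $\pm 1$ observation, a telescoping sum, and the rearrangement statement of Lemma~\ref{t:permutation}; there is no estimate. (If one states the claim via a shortest chain instead, the only extra remark needed is that the minimal number of value-switching adjacent exchanges from $a$ to $b$ need not equal $|\inv(a)-\inv(b)|$, but is congruent to it modulo $2$.)
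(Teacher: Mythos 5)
Your proof is correct, and since the paper states Observation~\ref{t:count_transpositions} without any proof, your argument -- a value-switching adjacent exchange changes $\inv$ by exactly $\pm 1$, a non-switching one changes nothing, and telescoping gives $N\equiv \inv(a)-\inv(b)\pmod 2$ -- is precisely the intended justification, and it meshes with how the observation is actually used later (Definition~\ref{def.distance} and the cyclic-shift count $n_3$ in Proposition~\ref{prop:mainr} both work with adjacent exchanges). Your caveat is also substantively right, not just pedantry: for words with repeated letters the parity of the number of value-switching steps is \emph{not} invariant over sequences of arbitrary-position transpositions. For instance, $(1,0,0)$ can be carried to $(0,1,0)$ either by one distinct-value swap of positions $1,2$, or by the distinct-value swaps of positions $1,3$ and then $2,3$ -- one step versus two -- and such words do occur as row-color sequences (take $r=2$, $k=3$, $\lambda=(1,1)$). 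So the ``equivalently, each'' clause of the observation is true only under the adjacent-transposition reading, which is the one consistent with Definition~\ref{def.distance}; your proof supplies exactly what is needed there and in Observation~\ref{lem:sign2}.
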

	
	\begin{lemma}\label{t:sign_independent_of_k}
		The number $\sign_r^{(k)}(\lambda)$ is independent of $k$,
		as long as $k$ is larger or equal to the number of (positive) parts of $\lambda$.
		We shall therefore denote it simply by $\sign_r(\lambda)$.
	\end{lemma}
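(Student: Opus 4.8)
The plan is to establish the slightly stronger statement that $\sign_r^{(k)}(\lambda) = \sign_r^{(k+1)}(\lambda)$ whenever $\lambda$ has at most $k$ positive parts; the lemma then follows by induction. Fix such a $k$, write $\lambda = (\ell_1, \ldots, \ell_k)$ with $\ell_1 \ge \cdots \ge \ell_k \ge 0$, and use $(\ell_1, \ldots, \ell_k, 0)$ as the length-$(k+1)$ representation. By Definition~\ref{def.a_sequence}, passing from $k$ to $k+1$ parts replaces each $\ell_i + k - i$ $(1 \le i \le k)$ by $\ell_i + (k+1) - i = (\ell_i + k - i) + 1$, and introduces one new value equal to $0$ in the last position. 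Hence, writing $\sigma$ for the map $x \mapsto (x+1) \bmod r$ on $\{0, \ldots, r-1\}$, if $a^{(k)}(\lambda) = (a_1, \ldots, a_k)$ then
\[
a^{(k+1)}(\lambda) = \bigl(\sigma(a_1), \ldots, \sigma(a_k), 0\bigr),
\]
and $a^{(k+1)}(\varnothing)$ is obtained from $a^{(k)}(\varnothing)$ in the same way: apply $\sigma$ entrywise and append a final $0$.

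Next I would compute $\inv_r^{(k+1)}(\lambda) - \inv_r^{(k)}(\lambda)$ modulo $2$. The key point is that $\sigma$, although not monotone, disturbs only the value $r-1$: it preserves the relative order of $0, 1, \ldots, r-2$ and sends $r-1$ below all of them. Consequently, among the first $k$ positions, applying $\sigma$ destroys exactly the inversions $(i,j)$ with $a_i = r-1 \neq a_j$ and creates exactly those with $a_i \neq r-1 = a_j$; if $c$ denotes the number of entries of $a^{(k)}(\lambda)$ equal to $r-1$, the total number of position-pairs involved is $c(k-c)$, so the net change in the inversion count is $\equiv c(k-c) \pmod 2$. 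Moreover, the appended final $0$ forms an inversion with position $i \le k$ exactly when $\sigma(a_i) \neq 0$, i.e.\ when $a_i \neq r-1$, contributing a further $k-c$ inversions. Hence
\[
\inv_r^{(k+1)}(\lambda) - \inv_r^{(k)}(\lambda) \equiv c(k-c) + (k-c) \pmod 2,
\]
and the right-hand side depends only on $k$ and on the number $c$ of entries of $a^{(k)}(\lambda)$ equal to $r-1$.

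Finally, by Lemma~\ref{t:permutation} the sequence $a^{(k)}(\lambda)$ is a rearrangement of $a^{(k)}(\varnothing)$, so $a^{(k)}(\varnothing)$ contains exactly the same number $c$ of entries equal to $r-1$. Applying the displayed congruence to $\lambda$ and to $\varnothing$ and subtracting, the two right-hand sides cancel, giving
\[
\bigl(\inv_r^{(k+1)}(\lambda) - \inv_r^{(k+1)}(\varnothing)\bigr) - \bigl(\inv_r^{(k)}(\lambda) - \inv_r^{(k)}(\varnothing)\bigr) \equiv 0 \pmod 2,
\]
which is precisely the statement $\sign_r^{(k+1)}(\lambda) = \sign_r^{(k)}(\lambda)$. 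The only delicate step is the middle one: one has to notice that the alphabet-shift $\sigma$ is not order preserving and to bookkeep carefully the inversions it creates and destroys; the point that makes it work is that these lie in a common pool of size $c(k-c)$, so that modulo $2$ the increment depends only on the multiset of colours, which is exactly why it cancels between $\lambda$ and $\varnothing$.
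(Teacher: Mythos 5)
Your proof is correct. It opens with the same key observation as the paper's proof, namely that $a^{(k+1)}(\lambda) = (a_1+1, \ldots, a_k+1, 0)$ with addition modulo $r$, and that $a^{(k+1)}(\varnothing)$ is obtained from $a^{(k)}(\varnothing)$ in the same way; but from there the two arguments diverge. The paper simply invokes Observation~\ref{t:count_transpositions}: since both sequences undergo the same entrywise bijection and acquire a trailing $0$ in the same position, any sequence of distinct-value transpositions sorting $a^{(k)}(\lambda)$ into $a^{(k)}(\varnothing)$ transfers verbatim to the length-$(k+1)$ sequences, so the parity is unchanged. You instead compute the increment $\inv_r^{(k+1)} - \inv_r^{(k)}$ directly, correctly isolating the non-monotonicity of $x \mapsto (x+1) \bmod r$ at the value $r-1$ and showing the increment is $\equiv (c+1)(k-c) \pmod 2$, a quantity depending only on $k$ and the multiplicity $c$ of the value $r-1$; the cancellation between $\lambda$ and $\varnothing$ then requires Lemma~\ref{t:permutation}. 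Your route is longer and more computational, but it is self-contained in the sense that it works straight from Definition~\ref{def.inv_r} without passing through the transposition-parity reformulation; note that both arguments ultimately rest on $a^{(k)}(\lambda)$ being a rearrangement of $a^{(k)}(\varnothing)$ (explicitly in yours, implicitly in the paper's via Observation~\ref{t:count_transpositions}), so both tacitly assume $\lambda$ has an empty $r$-core, which is the only case in which $\sign_r$ is ever used.
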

	
	\begin{proof}
		If $a^{(k)}(\lambda) = (a_1, \ldots, a_k)$
		then $a^{(k+1)}(\lambda) = (a_1 + 1, \ldots, a_k + 1, 0)$, where addition is modulo $r$. 
		A similar connection holds between $a^{(k)}(\varnothing)$ and $a^{(k+1)}(\varnothing)$, and the claim thus follows from Observation~\ref{t:count_transpositions}.
	\end{proof}
	
	
	As noted in the proof of Theorem~\ref{thm:MN_boundary},
	if $\lambda = \varphi_r(\lambda^{(0)}, \ldots, \lambda^{(r-1)}) \in \Par_r(rn)$ 
	and $m$ is a positive integer, then
	peeling a ribbon of length $rm$ from $\lambda$ is equivalent to peeling a ribbon of length $m$ from one of $\lambda^{(0)}, \ldots, \lambda^{(r-1)}$.
	It follows that,
	for every composition $\mu = (\mu_1,\ldots,\mu_t)$ of $n$, 
	there is a natural bijection between $r\mu$-peelings of $\lambda$ 
	and $\mu$-peelings of its $r$-quotient $(\lambda^{(0)}, \ldots, \lambda^{(r-1)})$.
	To prove Theorem~\ref{thm:main_r}, it thus suffices to show that the signs of matching peelings under this bijection differ by a $\pm1$ factor 
	which depends only on $r$ and $\lambda$.
	
	\begin{proposition}\label{prop:mainr}
		For any partition $\lambda \in \Par_r(rn)$ 
		and any composition $\mu$ of $n$,
		the sign of any $r\mu$-peeling of $\lambda$ and
		the sign of the corresponding $\mu$-peeling of its $r$-quotient differ by the multiplicative factor $\sign_r(\lambda)$.
	\end{proposition}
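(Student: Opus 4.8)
The plan is to compare, ribbon by ribbon along a peeling, the \emph{full} height of each ribbon peeled from $\lambda$ (viewed as a partition of $rn$) with the \emph{restricted} height that enters the peeling algorithm of Theorem~\ref{thm:MN_boundary}; under the natural bijection recalled in the paragraph preceding the statement, the latter is exactly the ordinary height of the corresponding ribbon in a component of the $r$-quotient. I would fix once and for all an integer $k$ at least as large as the number of parts of $\lambda$, and represent every partition that occurs along a peeling by a boundary sequence with exactly $k$ zeros, so that the row-color sequence, $\inv_r^{(k)}$, and $\sign_r^{(k)}=\sign_r$ are defined throughout (Observations~\ref{obs:a} and~\ref{t:count_transpositions}, Definitions~\ref{def.a_sequence} and~\ref{def.inv_r}, Lemma~\ref{t:sign_independent_of_k}).

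The heart of the argument is a single peeling step. Suppose $\nu$ has at most $k$ parts and $\nu'$ is obtained from $\nu$ by switching $\delta_q=1$ and $\delta_{q+rm}=0$ in $\partial(\nu)$, that is, by peeling a ribbon of length $rm$. Writing $\beta_j=\ell_j+k-j$ for $\nu=(\ell_1,\dots,\ell_k)$, this step replaces $\beta_b$ by $\beta_b-rm$ for the row $b$ with $\beta_b=q+rm-1$; since $\beta_b\equiv\beta_b-rm\pmod r$, the effect on the row-color sequence $(a_1,\dots,a_k)$ is a cyclic shift of the block $a_b,a_{b+1},\dots,a_{b+s}$ into $a_{b+1},\dots,a_{b+s},a_b$, where $b+1,\dots,b+s$ are precisely the rows $j$ with $\beta_b-rm<\beta_j<\beta_b$ (this reuses the computation in the proof of Lemma~\ref{t:permutation}, now for ribbons of length $rm$ rather than $r$). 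Three observations finish this step: the full height of the peeled ribbon equals $s$; the restricted height (zeros in positions congruent to $q\pmod r$) equals $|\{1\le u\le s:\ a_{b+u}=a_b\}|$; and the displayed cyclic shift is a product of $s$ adjacent transpositions, each of which changes $\inv_r^{(k)}$ by $\pm1$ when it interchanges two distinct values and by $0$ otherwise, so that $\inv_r^{(k)}(\nu')-\inv_r^{(k)}(\nu)\equiv|\{1\le u\le s:\ a_{b+u}\neq a_b\}|\pmod 2$. Subtracting, I obtain the key congruence
\[
(\text{full height})-(\text{restricted height})\ \equiv\ \inv_r^{(k)}(\nu')-\inv_r^{(k)}(\nu)\pmod 2
\]
for every single peeling step.

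It then remains to telescope. An $r\mu$-peeling of $\lambda$ is a chain $\lambda=\lambda^{[0]}\to\lambda^{[1]}\to\cdots\to\lambda^{[t]}=\varnothing$ of single peeling steps (of ribbon lengths $r\mu_t,\dots,r\mu_1$), all of whose terms have at most $k$ parts since peeling only removes cells. Summing the key congruence over the $t$ steps, the $\inv_r^{(k)}$-terms telescope and yield $H-h\equiv\inv_r^{(k)}(\lambda)-\inv_r^{(k)}(\varnothing)\pmod 2$, where $H$ and $h$ denote the total full height and the total restricted height along the peeling. By the Murnaghan--Nakayama rule for $S_{rn}$, $(-1)^H$ is the sign with which this $r\mu$-peeling contributes to $\chi^\lambda_{r\mu}$, while by Theorem~\ref{thm:MN_boundary}, $(-1)^h$ is the sign with which the corresponding $\mu$-peeling of the $r$-quotient contributes to $\psi^{(\lambda^{(0)},\dots,\lambda^{(r-1)})}_{(\mu,\varnothing,\dots,\varnothing)}$. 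Hence the two signs differ by the factor $(-1)^{\inv_r^{(k)}(\lambda)-\inv_r^{(k)}(\varnothing)}=\sign_r^{(k)}(\lambda)=\sign_r(\lambda)$ (Lemma~\ref{t:sign_independent_of_k}), which depends only on $r$ and $\lambda$; this is exactly the assertion of the proposition, and summing over all $r\mu$-peelings gives Theorem~\ref{thm:main_r}.

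I expect the main obstacle to be the single-step identity: correctly identifying that the row-color sequence undergoes precisely a cyclic shift of a contiguous block (including the bookkeeping that $\beta_b-rm$ is non-negative and not already a $\beta$-number, so that the new $\beta$-set is again a set of $k$ distinct non-negative integers), and then matching the parity of that shift's inversion change with the gap between the full and restricted heights. Everything else is a telescoping sum together with the two instances of the Murnaghan--Nakayama rule already recorded above.
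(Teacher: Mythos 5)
Your proposal is correct and follows essentially the same route as the paper: the single-step identity (the switch of $\delta_q=1$ and $\delta_{q+rm}=0$ induces a cyclic shift of a contiguous block of the row-color sequence, whose inversion change has the parity of the number of zeros between the switched entries at positions \emph{not} congruent to $q \pmod r$, i.e.\ the gap between full and restricted height) is exactly the paper's key computation, and your telescoping over the steps of the peeling is just the unrolled form of the paper's induction on $n$.
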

	

	\begin{proof}
		By induction on $n$.
		Of course, the claim trivially holds for $n = 0$.
		
		Assume that $n > 0$.
		If there is no $r\mu$-peeling of $\lambda$ then
		there is also no $\mu$-peeling of its $r$-quotient, and the claim holds vacuously.
		We can therefore assume that there exists an $r\mu$-peeling of $\lambda$, and consider one of them.
		We further consider only the last entry $r\mu_t$ of $r\mu$, which corresponds to a ribbon of length $r\mu_t$.
		Let $\lambda' \in \Par_r(r(n - \mu_t))$ be the partition obtained by peeling it from $\lambda$.
		
		By assumption, there exists an index $q$ such that $\partial(\lambda')$ is obtained from $\partial(\lambda)$ by switching the entries 
		$\delta_{q} = 1$ and $\delta_{q + r\mu_t} = 0$. 
		We want to show that the sign contribution of this step to the $r\mu$-peeling of $\lambda$ and the sign contribution of this step to the corresponding $\mu$-peeling of its $r$-quotient differ by the multiplicative factor
		\[
		\sign_r(\lambda) / \sign_r(\lambda').
		\]
		Indeed, by Theorem~\ref{thm:MN_boundary}(4),
		the effect of the switch on the sign of the $\mu$-peeling is multiplication by $(-1)^{n_1}$, where $n_1$ is the number of zeros strictly between the switched letters, counting only positions which are congruent to $q \pmod r$.
		On the other hand, by 
		the same theorem
		with $r$ and $n$ replaced by $1$ and $rn$, respectively,
		the effect of this switch on the sign of the $r\mu$-peeling is multiplication by $(-1)^{n_2}$, where $n_2$ is the total number of zeros strictly between the switched letters. 
		Hence, the effect on the ratio of these two signs is multiplication by $(-1)^{n_3}$, where $n_3 = n_2 - n_1$ is the number of zeros between the switched letters, in positions which are {\em not} congruent to $q \pmod r$.
		
		Now let $i_1, \ldots, i_2$ be the indices of the rows corresponding to the zeros between positions $q$ and $q + r\mu_t$ in $\partial(\lambda)$.
		Note that the order is reversed;
		in particular, $i_1$ corresponds to $\delta_{q+r\mu_t} = 0$, while $i_2$ corresponds to the first zero after $\delta_q = 1$.
		Then
		\[
		n_3 =  |\{i \,:\, i_1 < i \le i_2,\, a_i \ne a_{i_1}\}|.
		\]
		By the same argument as in the proof of Lemma~\ref{t:permutation},
		the switch (or peeling step) amounts to a cyclic shift of the entries $a_{i_1}, \ldots, a_{i_2}$ in the row-color sequence.
		Thus, by Definition~\ref{def.inv_r},
		$\inv_r^{(k)}(\lambda) - \inv_r^{(k)}(\lambda')$
		has the same parity as $n_3$ above.
		Thus
		\[
		(-1)^{n_3} 
		= (-1)^{\inv_r^{(k)}(\lambda) - \inv_r^{(k)}(\lambda')}
		= \sign_r(\lambda) / \sign_r(\lambda'),
		\]
		as required.
		This completes the induction step.
	\end{proof}
	
	As remarked above, Proposition~\ref{prop:mainr} implies Theorem~\ref{thm:main_r}.

	\section{Wreath product with a finite abelian group}%
	\label{sec:finite_abelian_group}
	
	As noted by an anonymous referee, the results stated above actually hold for the wreath product of $S_n$ with an arbitrary finite abelian group $G$, not only a finite cyclic group.
	In order to justify this claim, let us first state Stembridge's extension of the Murnaghan-Nakayama formula in full generality.
	
	
	Let $G$ be a finite group,
	$C_G$ its set of conjugacy classes,
	and $I_G$ its set of irreducible characters. Denote $r := |C_G| = |I_G|$.
	The conjugacy classes of $G \wr S_n$ are indexed by functions $\kappa: C_G \to \Par$ such that the sum of the sizes of all partitions is $n$; and the irreducible characters of $G \wr S_n$ are indexed by functions $\chi: I_G \to \Par$ with a similar restriction.
	Now fix a bijection from $\{0, \ldots, r-1\}$ to $I_G$, so that 
	$I_G = \{\theta_0, \ldots , \theta_{r-1}\}$. Then the irreducible characters $\psi^\blambda$ of $G \wr S_n$ are indexed by $r$-partite partitions $\blambda$ of $n$, as defined in Section~\ref{sec:MN}.
	
	View the wreath product $G \wr S_n$ as the group of $n \times n$ pseudo permutation matrices in which the nonzero entries are chosen from $G$, and let $\pi \in G \wr S_n$. If $c = (i_1, \ldots, i_k)$ is a cycle in the permutation in $S_n$ underlying $\pi$, and $g_1, \ldots, g_k \in G$ are the nonzero entries in rows $i_1, \ldots, i_k$ of the matrix $\pi$, then the product $g_k \cdots g_1 \in G$ is well-defined up to a cyclic shift of the indices, thus up to conjugacy in $G$. Let $z(c) \in C_G$ be the corresponding conjugacy class.
	
	Recall, from Section~\ref{sec:MN}, the notion of an $r$-partite ribbon tableaux $\bT$ and the corresponding functions $\ell_\bT$, $ht_\bT$ and $f_\bT$.
	Stembridge's extension of the Murnaghan-Nakayama formula can be stated as follows.
	
	\begin{theorem}\label{thm:MN_G}{\rm \cite[Theorem 4.3]{Stembridge}}
		Let $G$ be a finite group with $|C_G| = |I_G| = r$, specifically $I_G = \{\theta_0, \ldots, \theta_{r-1}\}$, and let $\pi \in G \wr S_n$.
		Fix an arbitrary ordering $c = (c_1, \ldots, c_t)$ of the disjoint cycles of (the permutation underlying)  $\pi$. Let $\ell(c_i)$ be the length of the cycle $c_i$, and let $z(c_i) \in C_G$ be the corresponding conjugacy class, as above. 
		Then, for any $r$-partite partition $\blambda$ of $n$,
		\[
		\psi^\blambda(\pi) = \sum_{\bT \in \bRT_c(\blambda)}
		\prod_{i=1}^{t} (-1)^{ht_{\bT}(i)} \theta_{f_{\bT}(i)}(z(c_i)),
		\]
		where $\bRT_c(\blambda)$ is the set of all $r$-partite ribbon tableau $\bT$ of shape $\blambda$ such that $\ell_{\bT}(i) = \ell(c_i)$ $(\forall i)$;
		$ht_{\bT}(i) \ge 0$ is the $i$-th height of $\bT$; and
		$f_{\bT}(i) \in \{0, \ldots, r-1\}$ is the $i$-th index of $\bT$, as in Theorem~\ref{thm:MN}.
	\end{theorem}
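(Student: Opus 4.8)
We sketch a proof based on the Frobenius characteristic theory of wreath products, which reduces the statement to the ordinary Murnaghan--Nakayama rule for $S_n$ recorded right after Theorem~\ref{thm:MN}. The plan is as follows. Recall the graded ring isomorphism $\ch \colon \bigoplus_{n \ge 0} R(G \wr S_n) \xrightarrow{\ \sim\ } \bigotimes_{j=0}^{r-1} \Lambda^{(j)}$, where $R(G\wr S_n)$ is the virtual character ring and $\Lambda^{(j)}$ is a copy of the ring of symmetric functions in a variable set $\mathbf{x}^{(j)}$ attached to $\theta_j \in I_G$ (see Macdonald, \emph{Symmetric Functions and Hall Polynomials}, Ch.\ I, Appendix B, or Zelevinsky's PSH-algebra formalism). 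I would use two standard properties of $\ch$. First, $\ch(\psi^\blambda) = \prod_{j=0}^{r-1} s_{\lambda^{(j)}}(\mathbf{x}^{(j)})$. Second, if $\pi \in G \wr S_n$ has underlying cycles $c_1, \dots, c_t$ with lengths $\ell_i := \ell(c_i)$ and associated $G$-classes $z_i := z(c_i) \in C_G$, then
\[
\psi^\blambda(\pi) = \Bigl\langle\, \ch(\psi^\blambda),\ \prod_{i=1}^{t} P_{\ell_i}(z_i) \,\Bigr\rangle,
\qquad
P_k(z) := \sum_{j=0}^{r-1} \overline{\theta_j(z)}\, p_k(\mathbf{x}^{(j)}),
\]
with respect to the standard Hall inner product on $\bigotimes_j \Lambda^{(j)}$, for which distinct variable sets are orthogonal and $\langle s_{\lambda^{(j)}}, p_\mu(\mathbf{x}^{(j)})\rangle = \chi^{\lambda^{(j)}}_\mu$; the overline comes from the column-orthogonality transform relating the ``colored'' power sums $P_k(z)$ to the slot-wise power sums.

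Granting these, I would expand $\prod_{i=1}^t P_{\ell_i}(z_i)$ over all functions $f \colon \{1,\dots,t\} \to \{0,\dots,r-1\}$:
\[
\prod_{i=1}^{t} P_{\ell_i}(z_i) = \sum_{f} \Bigl(\prod_{i=1}^{t} \overline{\theta_{f(i)}(z_i)}\Bigr)\ \prod_{j=0}^{r-1} p_{\mu^{(j)}(f)}(\mathbf{x}^{(j)}),
\]
where $\mu^{(j)}(f)$ is the composition listing the lengths $\ell_i$ with $f(i) = j$, taken in increasing order of $i$. Since distinct slots are orthogonal, pairing with $\ch(\psi^\blambda)$ factors slot by slot, giving
\[
\psi^\blambda(\pi) = \sum_{f} \Bigl(\prod_{i=1}^{t} \overline{\theta_{f(i)}(z_i)}\Bigr)\ \prod_{j=0}^{r-1} \bigl\langle\, s_{\lambda^{(j)}},\ p_{\mu^{(j)}(f)}(\mathbf{x}^{(j)}) \,\bigr\rangle .
\]

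Next I would invoke the ordinary Murnaghan--Nakayama rule (the $r = 1$ case displayed after Theorem~\ref{thm:MN}) in each slot: $\langle s_{\lambda^{(j)}}, p_{\mu^{(j)}(f)}\rangle$ is the signed sum $\sum_T \prod (-1)^{ht_T}$ over ribbon tableaux $T$ of shape $\lambda^{(j)}$ whose successive ribbon lengths are the entries of $\mu^{(j)}(f)$ in that order --- legitimate because $p_{\mu^{(j)}(f)}$ depends only on the underlying multiset, so the rule applies for the ordering at hand. For a fixed $f$, choosing one ribbon tableau in each slot and interleaving the ribbon-peelings according to the global order $1, 2, \dots, t$ yields exactly an $r$-partite ribbon tableau $\bT$ of shape $\blambda$ with $\ell_\bT(i) = \ell_i$ for all $i$ and $f_\bT = f$; conversely each such $\bT$ arises uniquely this way. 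Under this bijection the product of the slot-wise signs becomes $(-1)^{\sum_{i=1}^t ht_\bT(i)}$ and the weight becomes $\prod_{i=1}^t \overline{\theta_{f_\bT(i)}(z(c_i))}$, so
\[
\psi^\blambda(\pi) = \sum_{\bT \in \bRT_c(\blambda)} \prod_{i=1}^{t} (-1)^{ht_\bT(i)}\, \overline{\theta_{f_\bT(i)}(z(c_i))}.
\]
To match the stated formula verbatim one takes the indexing bijection $\{0,\dots,r-1\} \to I_G$ to be $j \mapsto \overline{\theta_j}$ (the set $I_G$ being closed under complex conjugation); with a different convention the same argument gives the identity with each $\theta$ conjugated, which is exactly the convention choice reflected in the sign of the exponent of $\omega^{f_\bT(i)\cdot z(c_i)}$ in Theorem~\ref{thm:MN}.

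The hard part is not the combinatorics but the bookkeeping in the Frobenius dictionary: establishing the two properties of $\ch$ above with the correct complex conjugates and the correct centralizer-order normalizations $|Z_{G \wr S_n}(\pi)|$, and checking that constraining the ribbons in each fiber $f^{-1}(j)$ to be peeled in the ambient order $1, \dots, t$ is compatible with the order-independence of the Hall inner product. Once the characteristic map is set up, the remainder is a formal expansion together with the classical $S_n$ Murnaghan--Nakayama rule.
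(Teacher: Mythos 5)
The paper does not actually prove this statement: Theorem~\ref{thm:MN_G} is quoted from Stembridge with a citation and used as a black box, so there is no internal proof to compare your argument against. That said, your outline is correct, and it is essentially the standard derivation (in substance the one underlying the cited source, and the treatment in Macdonald's Appendix~B on wreath products): push everything through the characteristic map $\ch$ into $\bigotimes_{j}\Lambda^{(j)}$, expand the product of colored power sums $\prod_i P_{\ell_i}(z_i)$ over colorings $f$ of the cycles, pair slot by slot using the orthogonality of distinct variable sets, apply the ordinary $S_n$ Murnaghan--Nakayama rule in each slot (valid for an arbitrary ordering of the parts, as you note), and reassemble the resulting tuples of single-slot ribbon tableaux into elements of $\bRT_c(\blambda)$ via the interleaving bijection determined by $f = f_\bT$; the signs and weights visibly multiply correctly under this bijection. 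The two properties of $\ch$ you assume --- that $\ch(\psi^\blambda)=\prod_j s_{\lambda^{(j)}}(\mathbf{x}^{(j)})$ and that character values are recovered by pairing against products of the $P_{\ell_i}(z_i)$ --- are standard and legitimate to cite. The only genuine delicacies are the ones you already flag: the complex-conjugation convention in the change of basis between class-indexed and character-indexed power sums (which amounts only to a relabeling of $I_G$, and is why Theorem~\ref{thm:MN} can be stated with $\omega^{+f\cdot z}$ rather than $\omega^{-f\cdot z}$), and the centralizer-order normalization making the colored power sums orthogonal. Neither affects the combinatorial core, so I see no gap beyond the acknowledged reliance on the Frobenius dictionary.
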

	
	Now assume, further, that $G$ is commutative, so that $r = |C_G| = |I_G| = |G|$ and all irreducible characters are one-dimensional.
	Labeling the elements of the group $G = \{id_G = g_0, g_1, \ldots, g_{r-1}\}$, an element of $G \wr S_n$ with all cycles $c_i$ satisfying $z(c_i) = \{id_G\}$ is of type $(\mu,\varnothing,\ldots,\varnothing)$ for some partition $\mu$ of $n$,
	and then $\theta_j(z(c_i)) = 1$ for all $i$ and $j$.
	We obtain the following extension of Theorem~\ref{thm:main_r}.
	
	\begin{theorem}\label{thm:main_r_abelian}
		For every positive integer $r$  
		there exists a function $\sign_r : \Par_r \to \{1,-1\}$ such that for every finite abelian group $G$ of order $r$, 
		every $r$-partite partition $\blambda=(\lambda^{(0)},\ldots,\lambda^{(r-1)})$ of a positive integer $n$,
		and every composition $\mu = (\mu_1,\ldots,\mu_t)$ of $n$: 
		\[
		\psi^{(\lambda^{(0)},\ldots,\lambda^{(r-1)})}_{(\mu,\varnothing,\ldots,\varnothing)} = \sign_r(\lambda)\cdot \chi^\lambda_{r\mu},
		\]
		where 
		$\psi^{(\lambda^{(0)},\ldots,\lambda^{(r-1)})}$ is the irreducible $G \wr S_n$-character indexed by $(\lambda^{(0)},\ldots,\lambda^{(r-1)})$,
		$\chi^\lambda$ is the irreducible $S_{rn}$-character indexed by $\lambda := \varphi_r(\lambda^{(0)}, \ldots, \lambda^{(r-1)}) \in \Par_r$,
		and $r\mu := (r\mu_1,\dots,r\mu_t)$.
	\end{theorem}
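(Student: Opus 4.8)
The plan is to reduce Theorem~\ref{thm:main_r_abelian} to Theorem~\ref{thm:main_r} (equivalently, to Theorem~\ref{thm:MN_boundary}) by observing that, on the conjugacy class of type $(\mu,\varnothing,\ldots,\varnothing)$, Stembridge's formula (Theorem~\ref{thm:MN_G}) does not see the group structure of $G$ at all. First I would fix, once and for all, the bijection $\{0,\ldots,r-1\}\to I_G$, $j\mapsto\theta_j$, used to index the irreducible characters $\psi^\blambda$ of $G\wr S_n$ by $r$-partite partitions $\blambda=(\lambda^{(0)},\ldots,\lambda^{(r-1)})$; this is exactly the set-up described before Theorem~\ref{thm:MN_G}. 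With this convention the partition $\lambda=\varphi_r(\lambda^{(0)},\ldots,\lambda^{(r-1)})$ that appears on the right-hand side of the identity is unambiguously determined by $\psi^\blambda$, and the statement to be proved is meaningful.

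Next I would take an element $\pi\in G\wr S_n$ in the conjugacy class of type $(\mu,\varnothing,\ldots,\varnothing)$, so that every cycle $c_i$ of the underlying permutation satisfies $z(c_i)=\{id_G\}$, the conjugacy class of the identity. Since $G$ is abelian (the standing hypothesis of Section~\ref{sec:finite_abelian_group}), every $\theta_j$ is one-dimensional, so $\theta_j(id_G)=1$ for all $j$, and the product $\prod_{i=1}^{t}(-1)^{ht_{\bT}(i)}\theta_{f_{\bT}(i)}(z(c_i))$ in Theorem~\ref{thm:MN_G} collapses to $\prod_{i=1}^{t}(-1)^{ht_{\bT}(i)}$. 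Hence
\[
\psi^{(\lambda^{(0)},\ldots,\lambda^{(r-1)})}_{(\mu,\varnothing,\ldots,\varnothing)}
=\sum_{\bT\in\bRT_c(\blambda)}\prod_{i=1}^{t}(-1)^{ht_{\bT}(i)},
\]
which is literally the right-hand side of Theorem~\ref{thm:MN} specialized to $z(c_i)=0$, and therefore, by exactly the chain of reinterpretations carried out in Section~\ref{sec:peelings}, equals the output of the peeling algorithm of Theorem~\ref{thm:MN_boundary}. In particular this value coincides with the corresponding character value of $\ZZ_r\wr S_n$ computed in Section~\ref{sec:main}.

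Finally, since the entire derivation of Theorem~\ref{thm:main_r} from Theorem~\ref{thm:MN_boundary}, through Definition~\ref{def.inv_r}, Lemma~\ref{t:sign_independent_of_k} and Proposition~\ref{prop:mainr}, speaks only of partitions, boundary sequences, $r$-quotients and the peeling algorithm, and never of the ambient group, it applies verbatim. Thus the same function $\sign_r:\Par_r\to\{1,-1\}$ works, and $\psi^{(\lambda^{(0)},\ldots,\lambda^{(r-1)})}_{(\mu,\varnothing,\ldots,\varnothing)}=\sign_r(\lambda)\cdot\chi^\lambda_{r\mu}$ for every finite abelian $G$ of order $r$. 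I do not expect a genuine obstacle: the only points requiring care are (i) checking that Stembridge's general formula specializes to the cyclic one on the classes of interest, which is immediate from $|G|=|I_G|$ together with one-dimensionality of the $\theta_j$; and (ii) being consistent about the arbitrary bijection $I_G\leftrightarrow\{0,\ldots,r-1\}$, so that ``the'' partition $\lambda$ on the two sides of the identity is the same. Both are bookkeeping rather than mathematics; the substantive content is already contained in Theorem~\ref{thm:main_r}.
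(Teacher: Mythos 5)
Your proposal is correct and follows essentially the same route as the paper: the authors likewise note that on a class of type $(\mu,\varnothing,\ldots,\varnothing)$ every cycle has $z(c_i)=\{id_G\}$, so $\theta_j(z(c_i))=1$ for all one-dimensional $\theta_j$ and Stembridge's formula collapses to the purely combinatorial sum, after which the peeling analysis of Sections~\ref{sec:peelings} and~\ref{sec:main} applies verbatim. No discrepancy to report.
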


	\begin{proof}
		The proof is exactly the same as for the special case $G = \ZZ_r$, since 
		the combinatorics of peelings described in Subsection~\ref{sec:peelings} 
		(leading to Theorem~\ref{thm:MN_boundary}) 
		and analyzed in Section~\ref{sec:main} 
		(leading to Theorem~\ref{thm:main_r}) 
		is the same. 
	\end{proof}
	
	\begin{remark}
		The function $\sign_r$ depends on the size $r = |G|$ only, and not on the structure of $G$.
	\end{remark}

	
	An alternative algebraic proof of Theorem~\ref{thm:main_r_abelian} was suggested by the anonymous referee. 
	
	\section{Alternative descriptions}%
	\label{sec:additional}
	
	We conclude with some alternative descriptions of $\sign_r(\lambda)$.
	
	%
	%
	
	\begin{definition}\label{def.distance}
		For a partition $\lambda$ with an empty $r$-core, 
		let $k$ be any integer larger or equal to the number of parts in $\lambda$.
		Define $d_r(\lambda,\varnothing)$ to be the minimal number of adjacent transpositions needed to transform the word $a^{(k)}(\lambda) \in [0,r-1]^k$ into the word $a^{(k)}(\varnothing) \in [0,r-1]^k$, where $\varnothing$ is the empty partition represented by a sequence of $k$ zeros.
		Note that $d_r(\lambda,\varnothing)$ is independent of the choice of $k$.
	\end{definition} 
	
	\begin{example}\label{ex:distance}
		For $\lambda$ and $k$ as in Example~\ref{ex:color_sequence},
		$a^{(k)}(\lambda) = (0,2,0,1,1)$
		and $a^{(k)}(\varnothing) = (1,0,2,1,0)$,
		so that $d_r(\lambda,\varnothing) = 4$.
	\end{example}
	
	
	Observation~\ref{t:count_transpositions} implies
	
	\begin{observation}\label{lem:sign2}
		The $\sign_r$ function in Theorem~\ref{thm:main_r} satisfies
		\[
		\sign_r(\lambda) = (-1)^{d_r(\lambda,\varnothing)}.
		\]
	\end{observation}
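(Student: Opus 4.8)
The claim to prove is Observation~\ref{lem:sign2}, asserting that $\sign_r(\lambda) = (-1)^{d_r(\lambda,\varnothing)}$, where $d_r(\lambda,\varnothing)$ is the minimal number of \emph{adjacent} transpositions needed to sort the word $a^{(k)}(\lambda)$ into $a^{(k)}(\varnothing)$ — equivalently (since the multiset of letters is the same, by Lemma~\ref{t:permutation}) the number of inversions in $a^{(k)}(\lambda)$ relative to $a^{(k)}(\varnothing)$.

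The plan is to reduce everything to parity of permutation length and apply Observation~\ref{t:count_transpositions}. First I would fix $k$ at least the number of parts of $\lambda$; by Lemma~\ref{t:sign_independent_of_k} and the final sentence of Definition~\ref{def.distance}, neither side depends on $k$, so this is harmless. Next, recall from Definition~\ref{def.inv_r} that $\sign_r(\lambda) = (-1)^{\inv_r^{(k)}(\lambda) - \inv_r^{(k)}(\varnothing)}$, and note that $\inv_r^{(k)}(\lambda) - \inv_r^{(k)}(\varnothing)$ counts exactly the pairs $(i,j)$ with $i<j$ that form an inversion in $a^{(k)}(\lambda)$ but not in $a^{(k)}(\varnothing)$, minus those inverted in $\varnothing$ but not in $\lambda$; taking the symmetric difference, this is (up to sign, hence up to parity irrelevance) the number of pairs whose relative order differs between the two words. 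The point is that this ``relative disorder'' count has the same parity as $d_r(\lambda,\varnothing)$, because each adjacent transposition changes the relative order of exactly one pair of positions holding distinct letters, and sorting one word into the other using $d_r(\lambda,\varnothing)$ adjacent transpositions changes the relative order of precisely the pairs that disagree.

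More cleanly: Observation~\ref{t:count_transpositions} already states that $\sign_r^{(k)}(\lambda) = -1$ iff any sequence of transpositions of \emph{distinct values} transforming $a^{(k)}(\lambda)$ into $a^{(k)}(\varnothing)$ has odd length. So it suffices to show that a minimal-length sequence of \emph{adjacent} transpositions (of any kind, but an adjacent transposition of two equal letters is trivial and never appears in a minimal sequence) has the same parity as some sequence of distinct-value transpositions. Two remarks finish this: (i) every adjacent transposition occurring in a \emph{minimal} sorting sequence necessarily swaps two distinct letters, since swapping equal letters is wasted; and (ii) adjacent transpositions generate the same sign homomorphism on the symmetric group acting on positions as arbitrary transpositions do, so parity of length is a well-defined invariant independent of whether the transpositions are adjacent or not. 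Combining (i), (ii), and Observation~\ref{t:count_transpositions} gives $\sign_r(\lambda) = (-1)^{d_r(\lambda,\varnothing)}$.

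I do not expect a serious obstacle here; the statement is essentially a repackaging of Observation~\ref{t:count_transpositions} together with the elementary fact that the parity of the number of adjacent transpositions sorting one word into another (with the same content) equals the parity of the number of ``bad pairs,'' which in turn equals the parity of the number of distinct-value transpositions. The one point requiring a word of care is the passage between ``transpositions of distinct values'' (as in Observation~\ref{t:count_transpositions}) and ``adjacent transpositions'' (as in Definition~\ref{def.distance}): one must observe that in a minimal adjacent-transposition sorting no step swaps two equal letters, so such a sequence is automatically a sequence of distinct-value transpositions, and its length is $d_r(\lambda,\varnothing)$; Observation~\ref{t:count_transpositions} then applies verbatim. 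The proof is therefore a single short paragraph.

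\begin{proof}
	Fix $k$ at least as large as the number of parts of $\lambda$; by Lemma~\ref{t:sign_independent_of_k} and Definition~\ref{def.distance}, both sides are independent of this choice.
	Consider a minimal sequence of $d_r(\lambda,\varnothing)$ adjacent transpositions transforming $a^{(k)}(\lambda)$ into $a^{(k)}(\varnothing)$.
	No step in this sequence swaps two \emph{equal} letters, as such a swap has no effect and could be deleted, contradicting minimality.
	Hence this is a sequence of transpositions switching two distinct values, of length $d_r(\lambda,\varnothing)$.
	By Observation~\ref{t:count_transpositions},
	$\sign_r(\lambda) = -1$ if and only if $d_r(\lambda,\varnothing)$ is odd, i.e.\ $\sign_r(\lambda) = (-1)^{d_r(\lambda,\varnothing)}$.
\end{proof}
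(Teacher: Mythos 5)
Your proof is correct and is exactly the derivation the paper intends: the paper simply states that Observation~\ref{t:count_transpositions} implies the result, and you supply the one missing detail, namely that a minimal sequence of adjacent transpositions never swaps two equal letters and hence qualifies as a sequence of distinct-value transpositions of length $d_r(\lambda,\varnothing)$. No gap.
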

	
	
	
	For $r=2$ there is also a surprisingly simple formula, observed in~\cite{LPA}.
	
	\begin{corollary}\cite[Prop. 5.4]{LPA}\label{cor:Ayyer}
		For every partition $\lambda\vdash 2n$ with an empty $2$-core  
		\[
		\sign_2(\lambda)=(-1)^{\odd(\lambda)/2},
		\]
		where $\odd(\lambda)$ is the number of odd parts in $\lambda$.
	\end{corollary}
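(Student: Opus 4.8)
The plan is to evaluate $\sign_2(\lambda)$ straight from Definition~\ref{def.inv_r}, that is, as $(-1)^{\inv_2^{(k)}(\lambda)-\inv_2^{(k)}(\varnothing)}$, for a conveniently chosen $k$. By Lemma~\ref{t:sign_independent_of_k} I may take $k$ to be an \emph{even} integer at least as large as the number of parts of $\lambda$. Write the two row-color sequences as $a^{(k)}(\lambda)=(a_1,\dots,a_k)$ and $a^{(k)}(\varnothing)=(a^\circ_1,\dots,a^\circ_k)$. By Definition~\ref{def.a_sequence}, $a_i\equiv \ell_i+k-i$ and $a^\circ_i\equiv k-i\pmod 2$, so $a_i\neq a^\circ_i$ holds exactly when $\ell_i$ is odd; moreover, since $k$ is even, $a^\circ_i=1$ exactly for odd $i$, so $a^{(k)}(\varnothing)$ has exactly $k/2$ ones, and hence — by Lemma~\ref{t:permutation} — so does $a^{(k)}(\lambda)$.

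The first step is a routine computation with the inversion statistic of a $0/1$ word. If a word $w$ of length $k$ has its ones in positions $p_1<\dots<p_m$, then the $s$-th one is followed by exactly $(k-p_s)-(m-s)$ zeros, so
\[
\inv(w)=m(k-m)+\binom{m+1}{2}-\sum_{s=1}^{m}p_s .
\]
Applying this to $a^{(k)}(\lambda)$ and to $a^{(k)}(\varnothing)$, which share the value $m=k/2$, gives
\[
\inv_2^{(k)}(\lambda)-\inv_2^{(k)}(\varnothing)
=\sum_{i:\,a^\circ_i=1} i-\sum_{i:\,a_i=1} i
\equiv \sum_{i:\,a_i\neq a^\circ_i} i
=\sum_{i:\,\ell_i\text{ odd}} i \pmod 2 ,
\]
the congruence holding because an index with $a_i=a^\circ_i=1$ contributes $2i\equiv 0$ and an index with $a_i=a^\circ_i=0$ contributes nothing. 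Thus $\sign_2(\lambda)=(-1)^{\sum_{i:\,\ell_i\text{ odd}} i}$; note that so far only the ``equal number of ones'' consequence of the empty $2$-core has been used.

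The second step converts $\sum_{i:\,\ell_i\text{ odd}} i$ into $\odd(\lambda)/2$. Set $P=\{i:a_i=0,\ a^\circ_i=1\}$ and $Q=\{i:a_i=1,\ a^\circ_i=0\}$. Since both words have $k/2$ ones, $|P|=|Q|$; and $P\cup Q=\{i:a_i\neq a^\circ_i\}=\{i:\ell_i\text{ odd}\}$, so $\odd(\lambda)=|P|+|Q|=2|P|$ (in particular $\odd(\lambda)$ is even, as it must be for the statement to make sense). Because $k$ is even, $a^\circ_i=1\iff i$ is odd, so every element of $P$ is odd and every element of $Q$ is even; therefore
\[
\sum_{i:\,\ell_i\text{ odd}} i=\sum_{i\in P} i+\sum_{i\in Q} i\equiv |P|+0=\tfrac12\odd(\lambda)\pmod 2 .
\]
Combining the two steps yields $\sign_2(\lambda)=(-1)^{\odd(\lambda)/2}$, as desired.

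The only genuinely structural input is the equality $|P|=|Q|$ — equivalently, that $a^{(k)}(\lambda)$ and $a^{(k)}(\varnothing)$ have the same number of ones — supplied by Lemma~\ref{t:permutation}, and this is precisely where the empty-$2$-core hypothesis enters; everything else is bookkeeping with binary words. I expect that bookkeeping, namely the passage $\sum_{i:\,\ell_i\text{ odd}} i\equiv\odd(\lambda)/2$ together with the use of the parity of $k$, to be the only place that needs a little care. As a sanity check, for the non-trivial $2$-core $(3,2,1)$ one has $\sum_{i:\,\ell_i\text{ odd}} i=1+3=4$ while $\odd((3,2,1))/2=1$, so the identity really does fail when the $2$-core is nonempty; alternatively, the second step could be proved by induction on $|\lambda|$, checking that peeling a horizontal domino changes neither side modulo $2$ while peeling a vertical domino changes each side by $1$.
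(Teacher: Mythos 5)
Your proof is correct, but it takes a genuinely different route from the paper's. The paper argues by induction on $n$: it peels a single domino $\nu$ with $\lambda\setminus\nu\in\Par_2(2n-2)$ and checks that a horizontal domino changes neither $\odd(\lambda)$ nor the row-color sequence, while a vertical domino changes $\odd$ by $\pm 2$ and the row-color sequence by one adjacent transposition of distinct entries, then invokes Observation~\ref{lem:sign2}. You instead compute $\inv_2^{(k)}(\lambda)-\inv_2^{(k)}(\varnothing)$ in closed form from the explicit inversion count $m(k-m)+\binom{m+1}{2}-\sum_s p_s$ of a binary word, which I have checked: with $k$ even both words have $m=k/2$ ones (the latter by Lemma~\ref{t:permutation}, the only place the empty-$2$-core hypothesis enters), the constants cancel, and the parity bookkeeping with the sets $P$ and $Q$ is sound. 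Your approach buys an explicit intermediate identity $\sign_2(\lambda)=(-1)^{\sum_{i:\,\ell_i\ \mathrm{odd}}i}$ and isolates precisely where the hypothesis is used (nicely illustrated by your $(3,2,1)$ counterexample); it also avoids the unproved step in the paper's induction that every $\lambda\in\Par_2(2n)$ admits a domino whose removal stays in $\Par_2$. The paper's induction is shorter and reuses the peeling machinery already developed; indeed, the alternative you sketch in your last sentence for the second step is essentially the paper's entire argument.
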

	
	\begin{proof}
		By induction on $n$. 
		If $n=0$, then $\sign_2(\varnothing)=1$,  $\odd(\varnothing)=0$ and equality holds. 
		For evey $\lambda\in \Par_2(2n)$ there 
		exists a ribbon $\nu$ of size $2$ (``domino'') 
		such that $\lambda\setminus\nu\in \Par_2(2n-2)$.
		If $\nu$ is horizontal then $\odd(\lambda) = \odd(\lambda\setminus\nu)$ and $a(\lambda\setminus\nu)=a(\lambda)$, thus by Observation~\ref{lem:sign2},
		$\sign_2(\lambda)=\sign_2(\lambda\setminus\nu)$.
		If $\nu$ is vertical then $\odd(\lambda)=\odd(\lambda\setminus\nu)\pm 2$ and $a(\lambda)$ is obtained from 
		$a(\lambda\setminus\nu)$ by switching two adjacent entries, hence  by Observation~\ref{lem:sign2},
		$\sign_2(\lambda)=-\sign_2(\lambda\setminus\nu)$. The induction hypothesis completes the proof in both cases. 
	\end{proof}

	\begin{question}
		Is there a similar formula for other values of $r$?
	\end{question}

	\medskip
	
	\noindent {\bf Acknowledgements.} The authors thank the anonymous referees for 
	helpful comments. 
	In particular, in the original version of this paper the main result was stated for a finite cyclic group $G$; the observation that the result holds, with the same proof, for any finite abelian group is due to a referee's comment.

\end{document}